\newtheorem{proposition}{Proposition}
\author{%
  Mauricio C. de Oliveira%
  \thanks{University of California San Diego, Dept of Mechanical and
    Aerospace Enigineering, La Jolla, CA,
    USA. \texttt{mauricio@ucsd.edu}}%
  
  \today
}
\title{Estimating the Effective Reproduction Number and Variables of
  Disease Models for the COVID-19 Epidemic}
\begin{document}

\maketitle

%% \centerline{THIS IS A WORKING DOCUMENT}

\begin{abstract}
  This paper deals with the problem of estimating variables in
  nonlinear models for the spread of disease and its application to
  the COVID-19 epidemic. First unconstrained methods are revisited and
  they are shown to correspond to the application of a linear filter
  followed by a nonlinear estimate of the effective reproduction
  number after a change-of-coordinates. Unconstrained methods often
  fail to keep the estimated variables within their physical range and
  can lead to unreliable estimates that require aggressively smoothing
  the raw data. In order to overcome these shortcomings a constrained
  estimation method is proposed that keeps the model variables within
  pre-specified boundaries and can also promote smoothness of the
  estimates. Constrained estimation can be directly applied to raw
  data without the need of pre-smoothing and the associated loss of
  information and additional lag. It can also be easily extended to
  handle additional information, such as the number of infected
  individuals. The resulting problem is cast as a convex quadratic
  optimization problem with linear and convex quadratic
  constraints. It is also shown that both unconstrained and
  constrained methods when applied to death data are independent of
  the fatality rate. The methods are applied to public death data from
  the COVID-19 epidemic.
\end{abstract}

\section{Introduction}

Several authors have attempted to estimate variables and parameters
that can shed light into the progression of the COVID-19
epidemic~\cite{Yang2020,FernandezVillaverde2020,Buckman2020,Anastassopoulou2020}. The
majority of these works utilize classical compartmental epidemic
models~\cite{Hethcote2000}, upon which many predictions and
recommendation regarding COVID-19 are being built
upon~\cite{Ferguson2020}. Such models have also been used to study the
epidemic's behavior in the presence of feedback~\cite{Morato2020,Stewart2020,Cochrane2020}.

Compartmental models are nonlinear low-order continuous-time ordinary
differential equations that are suitable to analysis at population
levels. One of their main features is the relative low complexity and
limited number of parameters which are of easy
interpretation~\cite{Hethcote2000}. Among the existing works that
attempt to estimate such parameters from the available data, for
instance~\cite{Yang2020,FernandezVillaverde2020,Buckman2020,Anastassopoulou2020},
none seem to take advantage of the inherent properties of the model's
variables in the process of estimation. This means that, in the
inevitable presence of noise, the estimated variables and parameters
will often not be compatible with the underlying model and lead to
inconsistent estimates. To mitigate such
difficulties, virtually all works seem to resort to heavy
pre-filtering of the data. Smoothing filters unavoidably
lead to information loss as well as delays in the estimates.

The main contribution of the present paper is to introduce a framework
in which the model variables and parameters can remain constrained
during the process of estimation. One advantage is that all data can
be used for estimation without the need of a smoothing pre-filter,
therefore without incurring the associated data loss and
lag. Susceptible-Infected-Resolving-Deceased-reCovered (SIRDC) models
such as the on in~\cite{FernandezVillaverde2020} are the basic dynamic
models used in the present paper. A number of steps is involved.

First, in Section~\ref{sec:model}, the number of equations in the
model is reduced from five to three. Then a change of coordinates is
introduced with the purpose of isolating all nonlinearities to a
single equation and rewrite the model in terms of the \emph{Effective
  Reproduction Number} ($R$)~\cite{Delamater2019}. Parametrizing the
model in terms of $R$ will be key in rendering certain model
constraints linear.

Based on this reformulated model, in Section~\ref{sec:direct}, the
unconstrained method of~\cite{FernandezVillaverde2020} is shown to be
equivalent to the application of a linear filter followed by the
calculation of a nonlinear estimate for~$R$. This reformulation brings
to light certain properties of the method including the invariance of
the estimate of $R$ on the fatality rate.

In Section~\ref{sec:indirect}, the problem of estimating the variables
and the parameter $R$ of an SRIDC model is reformulated as a quadratic
optimization problem involving an auxiliary linear dynamic system. It
is this reformulation that enables the incorporation of explicit
constraints on the model's variables and the effective reproduction
number, $R$, and its derivative, $\dot{R}$. All such constraints are
shown to be linear in the variables of this auxiliary dynamic
system. The resulting optimization problem is a convex quadratic
program with linear constraints that can be solved efficiently using
off-the-shelf algorithms~\cite{boyd:COP:2004}. Besides enforcing
constraints, the proposed method also allows one to trade-off accuracy
versus smoothness of the estimates, all without requiring any
pre-filtering or smoothing of the data. As with the unconstrained
approach, the constrained estimate of $R$ is also shown to be
independent of the fatality rate. But unlike the unconstrained method,
it can be naturally extended to cover the availability of measurements
of other variables, such as the number of infected individuals.

The paper is closed in Section~\ref{sec:discussion} with brief
conclusion and the application of the constrained estimation method to
publicly available data of the COVID-19 epidemic from several
countries~\cite{Dong2020}.

\section{The SIRDC model for Spread of Disease}
\label{sec:model}

The model used in the present papes is the following
Susceptible-Infected-Resolving-Deceased-reCovered (SIRDC)
model~\cite{FernandezVillaverde2020}. Consider the following
variables:
\begin{itemize}
\item $x_1$: the population Susceptible (S) to a
  disease;
\item $x_2$: the population Infected (I) by a disease;
\item $x_3$: the population Resolving (R) from the
  disease.
\item $x_4$: the population that Died (D) from the
  disease.
\item $x_5$: the population reCovered (C) from the
  disease.
\end{itemize}
In the present paper all variables above are taken as fractions of a
total constant population. The complete SIRDC model is the following
system of nonlinear ordinary differential equations:
\begin{align}
  \label{eq:01a}
  \dot{x}_1 &= - \beta \, x_1 x_2 \\
  \label{eq:01b}
  \dot{x}_2 &= \beta \, x_1 x_2 - \gamma \, x_2 \\
  \label{eq:01c}
  \dot{x}_3 &= \gamma \, x_2 - \theta x_3, \\
  \label{eq:01d}
  \dot{x}_4 &= \delta \theta x_3 \\
  \label{eq:01e}
  \dot{x}_5 &= (1 - \delta) \theta x_3
\end{align}
The main goal of this paper is to estimate all variables in the above
model along with the time-varying parameter~$\beta$. As it will be
seen soon, it is more convenient to work with the \emph{effective
  reproduction number} $R$~\cite{Delamater2019}
\begin{align}
  \label{eq:R}
  R &= \frac{\beta x_1}{\gamma} > 0.
\end{align}
The remaining parameters are characteristic of the disease and here
are assumed to be constant and known:
\begin{itemize}
\item $\gamma$: corresponds to the inverse ammount of time a person is
  infectious, here $5$ days that is $\gamma \approx 0.2$.
\item $\theta$: corresponds to the inverse ammount of time a case
  resolves, here $10$ days that is $\theta \approx 0.1$.
\item $\delta$: the fatality rate, assumed to be
  $0.65$\%~\cite{cdc2020}.
\end{itemize}
Whereas the values of $\gamma$ and $\theta$ are relatively well
studied and can be safely assumed to be known, the fatality rate
$\delta$ carries a large degree of uncertainty, with a variety of
studies producing conflicting numbers and alluding to potential
variations due to local
conditions~\cite{Atkeson2020,Russell2020,Verity2020}. As it will be
seen later, the methods proposed in the present paper, as far as the
estimation of $R$ from death records is concerned, are independent of
the exact knowledge of $\delta$, which will affect the model's
variables but not $R$. The values of $\gamma$ and $\theta$ above were
the ones used in~\cite{FernandezVillaverde2020}.

\subsection{Reduced order model}

Note that not all equations in the SDIRC
model~\eqref{eq:01a}--\eqref{eq:01e} are independent. For instance
\begin{align*}
  \dot{x}_1 + \dot{x}_2 + \dot{x}_3 + \dot{x}_4 + \dot{x}_5 = 0
\end{align*}
which imply
\begin{multline*}
  {x}_1(t) + {x}_2(t) + {x}_3(t) + {x}_4(t) + {x}_5(t) = \\
  {x}_1(0) + {x}_2(0) + {x}_3(0) + {x}_4(0) + {x}_5(0) = 1
\end{multline*}
which reflects the assumption that the total population is
constant. Also
\begin{align*}
  \dot{x}_1 + \dot{x}_2 + \dot{x}_3 + \delta^{-1} \dot{x}_4 = 0
\end{align*}
which implies
\begin{align*}
  {x}_1(t) + {x}_2(t) + {x}_3(t) + \delta^{-1} {x}_4(t) = c,
\end{align*}
where $c$ is a constant. The value of this constant can be determined
as follows. Integrate~\eqref{eq:01d}--\eqref{eq:01e} to obtain
\begin{align*}
  x_4(t) &= x_4(0) + \delta \int_{0}^t \theta x_3(\tau) \, d\tau, \\
  x_5(t) &= x_5(0) + (1 - \delta) \int_{0}^t \theta x_3(\tau) \, d\tau
\end{align*}
from which it follows that
\begin{align*}
  (1 - \delta) \, (x_4(t) - x_4(0)) &=  \delta \, (x_5(t) - x_5(0)).
\end{align*}
For instance, if one assumes that $x_4(0) = x_5(0) = 0$, as in the
beginning of the disease, then
\begin{align*}
\delta^{-1} x_4(t) = x_4(t) + x_5(t)
\end{align*}
which implies that $c = 1$.

The above relationships means that the SIRDC model can be reduced to
its first three equations
\begin{align}
  \label{eq:02a}
  \dot{x}_1 &= - \beta \, x_1 x_2, \\
  \label{eq:02b}
  \dot{x}_2 &= \beta \, x_1 x_2 - \gamma \, x_2, \\
  \label{eq:02c}
  \dot{x}_3 &= \gamma \, x_2 - \theta \, x_3,
\end{align}
since the remaining variables
\begin{align*}
  x_4 &= \delta \left [ 1 - (x_1 + x_2 + x_3) \right ], \\
  x_5 &= (1 - \delta) \left [ 1 - (x_1 + x_2 + x_3) \right ],
\end{align*}
can be obtained from the reduced order model variables. The
measurement
\begin{align}
  \label{eq:02d}
  y &= x_4 = \delta \left [ 1 - (x_1 + x_2 + x_3) \right ]
\end{align}
can also be obtained from the first three variables.

The following basic properties of the variables in the SIRDC model
will be explicitly used later. Because $\beta > 0$, it follows that
\begin{align}
  \label{eq:props}
  x_i &\in [0, 1], \quad i = 1, 2, 3, &
  \dot{x}_1 &\leq 0
\end{align}
so that $x_1$ is monotonically decreasing. Indeed a distinctive aspect
of the approach in the present paper is that such constraints will be
enforced throughout the estimation process.

\subsection{Change of coordinates}

The reduced model~\eqref{eq:02a}--\eqref{eq:02d} is still nonlinear,
with the first and second equations containing products of the model
variables. The following change of coordinates can confine the
nonlinearities to a single equation, a key fact that will be used
afterwards, and express the dynamics in terms of the effective
reproduction number, $R$, defined in~\eqref{eq:R}. Consider the change
of coordinates
\begin{align*}
  \begin{pmatrix}
    z_1 \\ z_2 \\ z_3 \\ R
  \end{pmatrix} &= 
  \begin{pmatrix}
    x_1 \\ x_1 + x_2 \\ x_1 + x_2 + x_3 \\ \beta z_1 / \gamma 
  \end{pmatrix}, &
  \begin{pmatrix}
    x_1 \\ x_2 \\ x_3 \\ \beta
  \end{pmatrix} &= 
  \begin{pmatrix}
    z_1 \\ z_2 - z_1 \\ z_3 - z_2 \\ \gamma R / z_1
  \end{pmatrix}
\end{align*}
and apply it to~\eqref{eq:02a}--\eqref{eq:02d} to obtain the
equivalent model
\begin{align}
  \label{eq:03a}
  \dot{z}_1 &= - \gamma R \, (z_2 - z_1), \\
  \label{eq:03b}
  \dot{z}_2 &= - \gamma \, (z_2 - z_1), \\
  \label{eq:03c}
  \dot{z}_3 &= - \theta \, (z_3 - z_2),
\end{align}
and the measurement
\begin{align}
  \label{eq:03d}
  y &= \delta \left [ 1 - z_3 \right ].
\end{align}
Note that the above change of coordinates is well defined because for
any initial condition in which $x_1(0) > 0$ then $x_1(t) > 0$ for all
$t \leq 0$. The properties~\eqref{eq:props} can be translated in terms
of the new variables as
\begin{align}
  \label{eq:propscov}
  z_i &\in [0, 1], \quad i = 1, 2, 3, &
  z_1 &\leq z_2 \leq z_3, &
  \dot{z}_1 &\leq 0
\end{align}
where the ranking of the new variables come from the fact that they
are accumulated sums of non-negative values.

\subsection{Discrete-time model}
\label{sec:discrete}

For most of the remaining of this paper the following first-order
(Euler) approximation of the system~\eqref{eq:03a}--\eqref{eq:03d}
\begin{align}
  \label{eq:04a}
  z_1(k+1) &= z_1(k) - \gamma R(k) (z_2(k) - z_1(k)), \\
  \label{eq:04b}
  z_2(k+1) &= z_2(k) - \gamma \, ( z_2(k) - z_1(k) ), \\
  \label{eq:04c}
  z_3(k+1) &= z_3(k) - \theta  \, ( z_3(k) - z_2(k) ),
\end{align}
and the measurement
\begin{align}
  \label{eq:04d}
  y(k) &= \delta - \delta \, z_3(k),
\end{align}
will be used. The main goal is to estimate the time-varying parameter
$R(k)$ and the variables $x_1(k)$ through $x_3(k)$ from the
measurement $y(k)$ satisfying the constraints
\begin{multline}
  \label{eq:10}
  {z}_i(k) \in [0, 1], \quad i = 1, 2, 3,  \\
  {z}_1(k) \leq {z}_2(k) \leq {z}_3(k), \quad
  {z}_1(k+1) \leq {z}_1(k),
\end{multline}
which are the discrete-time counterparts to~\eqref{eq:propscov}.

\section{Unconstrained Estimation}
\label{sec:direct}

In the presence of the entire state evolution one could estimate
$R(k)$ by solving equation~\eqref{eq:04a}, that is
\begin{align}
  \label{eq:05}
  \hat{R}(k) &= \frac{z_1(k) - z_1(k+1)}{\gamma (z_2(k) - z_1(k))}.
\end{align}
In fact, assuming that the measurement $y(k)$ is free of noise, it is
possible to recursively rewrite $z_1(k)$, $z_2(k)$ and $z_3(k)$ in
terms of $y(k)$ and apply~\eqref{eq:05}. This process is equivalent to
the method proposed in~\cite{FernandezVillaverde2020}, leading to the
exact same results.

As a first contribution of the present paper, it is shown in the
Appendix, that this recursion amounts to applying the following linear
filter with state space realization
\begin{align}
  \label{eq:008a}
  \hat{x}(k+1) &= A \, \hat{x}(k) + B \, \hat{u}(k) \\
  \label{eq:008b}
  \hat{z}(k) &= C \, \hat{x}(k) + D \, \hat{u}(k)
\end{align}
in which
\begin{align}
  \label{eq:008c}
  A &=
  \begin{bmatrix}
    0 & 0 \\
    1 & 0
  \end{bmatrix}, \quad
  B =
  \begin{bmatrix}
    1 \\
    0
  \end{bmatrix}, \quad
  D =
  \begin{bmatrix}
    \gamma^{-1} \theta^{-1} \\
    0
  \end{bmatrix}, \\
  \label{eq:008d}
  C &= 
  \begin{bmatrix}
    \theta^{-1} \!+ \!\gamma^{-1} \!-\! 2 \, \theta^{-1} \gamma^{-1} & ( 1 \!-\! \theta^{-1})( 1 \!-\! \gamma^{-1}) \\
    \theta^{-1} & (1 - \theta^{-1})
  \end{bmatrix}, &
  %% &=
  %% \begin{bmatrix}
  %%   \theta^{-1} (1 - \gamma^{-1}) + \gamma^{-1} ( 1 - \theta^{-1}) & ( 1 -
  %%   \theta^{-1})( 1 - \gamma^{-1}) \\
  %%   \theta^{-1} & (1 - \theta^{-1})
  %% \end{bmatrix}, &
\end{align}
to the input
\begin{align}
  \label{eq:008e}
  \hat{u}(k) &= 1 - \delta^{-1} y(k),
\end{align}
so as to produce the vector of estimates
\begin{align*}
  \hat{z}(k) &=
  \begin{pmatrix}
    \hat{z}_1(k) \\
    \hat{z}_2(k)
  \end{pmatrix}
\end{align*}
from which $\hat{R}$ can be calculated as in~\eqref{eq:05} after
substituting $z_1$ and $z_2$ by the estimates $\hat{z}_1$ and
$\hat{z}_2$. The filter's initial condition should be initialized as
in
\begin{equation}
  \label{eq:008f}
\begin{aligned}
  \hat{x}(0) &= C^{-1} \left ( \hat{z}(0) - D \hat{u}(0) \right ), \\
  \hat{z}(0) &= 
  \begin{pmatrix}
    \hat{z}_1(0) \\
    \hat{z}_2(0) 
  \end{pmatrix} =
  \begin{pmatrix}
    \hat{x}_1(0) \\
    \hat{x}_1(0) + \hat{x}_2(0)
  \end{pmatrix}
\end{aligned}
\end{equation}
where $\hat{x}_1(0)$ and $\hat{x}_2(0)$ are estimates of the initial
susceptible and infected populations.

\begin{figure}
  \includegraphics[width=\columnwidth]{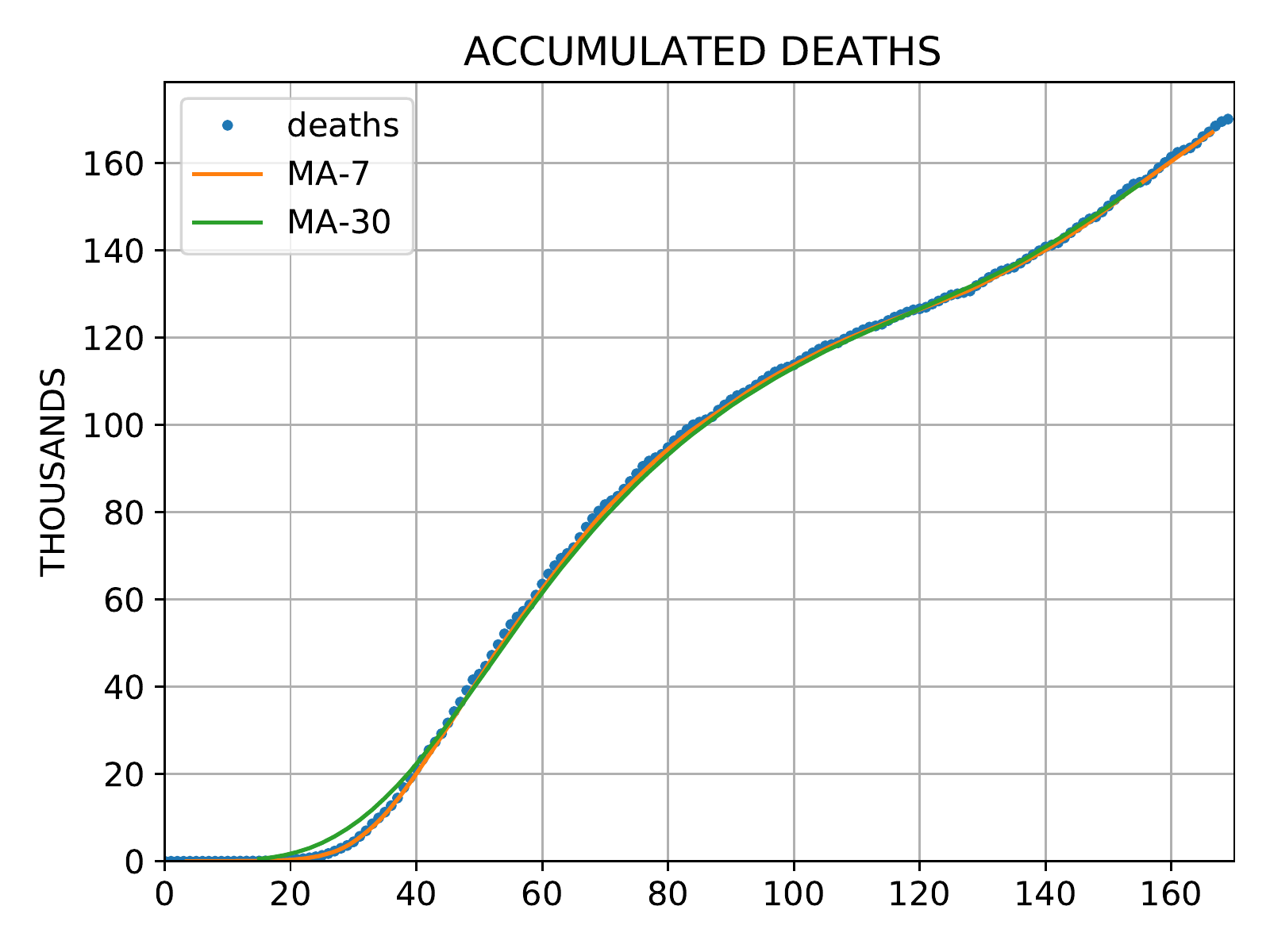}
  \includegraphics[width=\columnwidth]{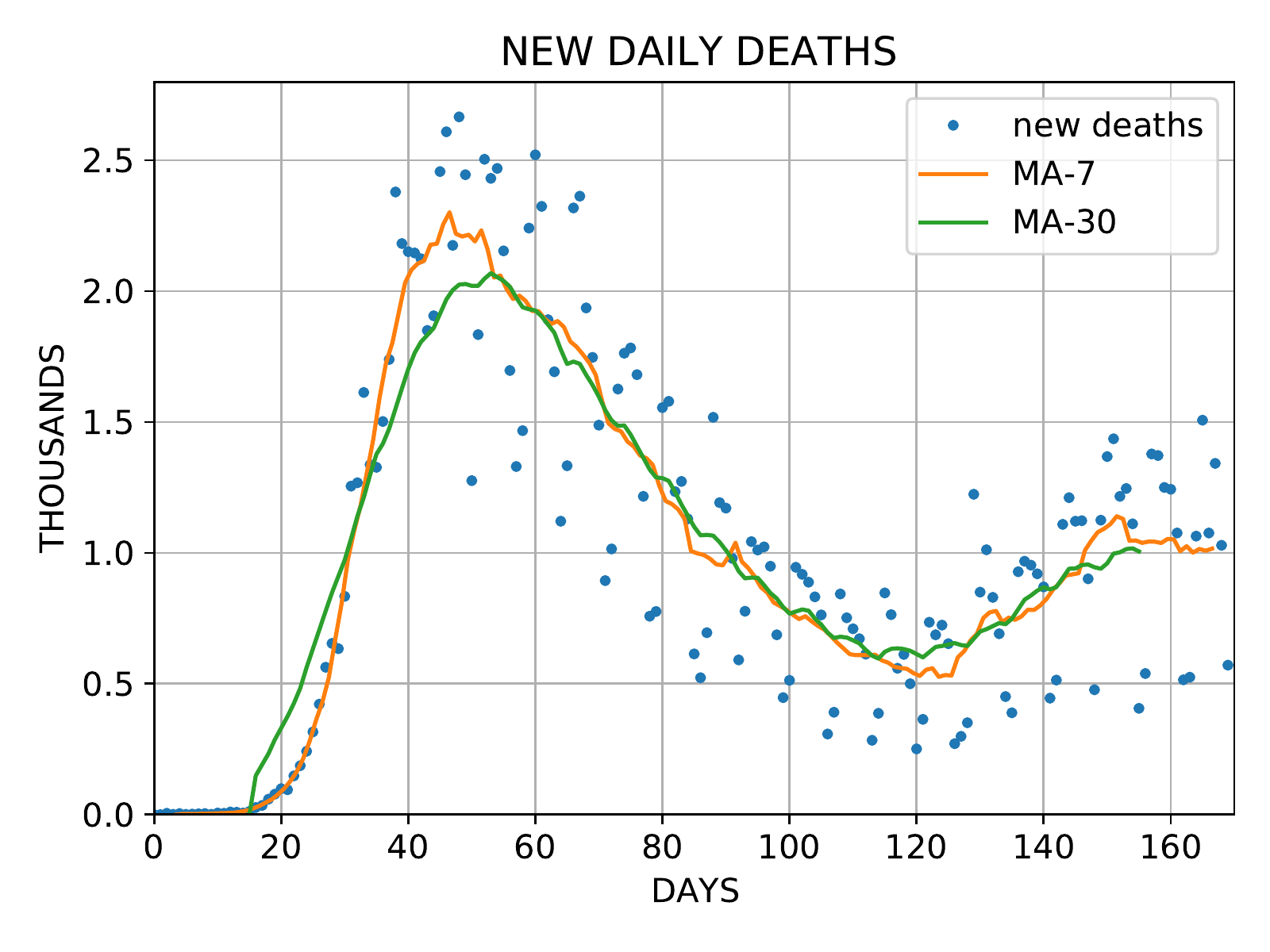}
  \caption{\label{fig:1}Accumulated and new daily deaths attributed to
    COVID--19 in the United States from 01/22/2020 through
    08/16/2020~\cite{Dong2020}. Day $0$ corresponds to the first day in
    this range in which a non-zero number of deaths was reported. Also
    shown is the result of the centered moving average with periods of
    $7$ and $30$ days. Note the large lags introduced by smoothing.}
\end{figure}

\begin{figure}
  \centering
  \includegraphics[width=\columnwidth]{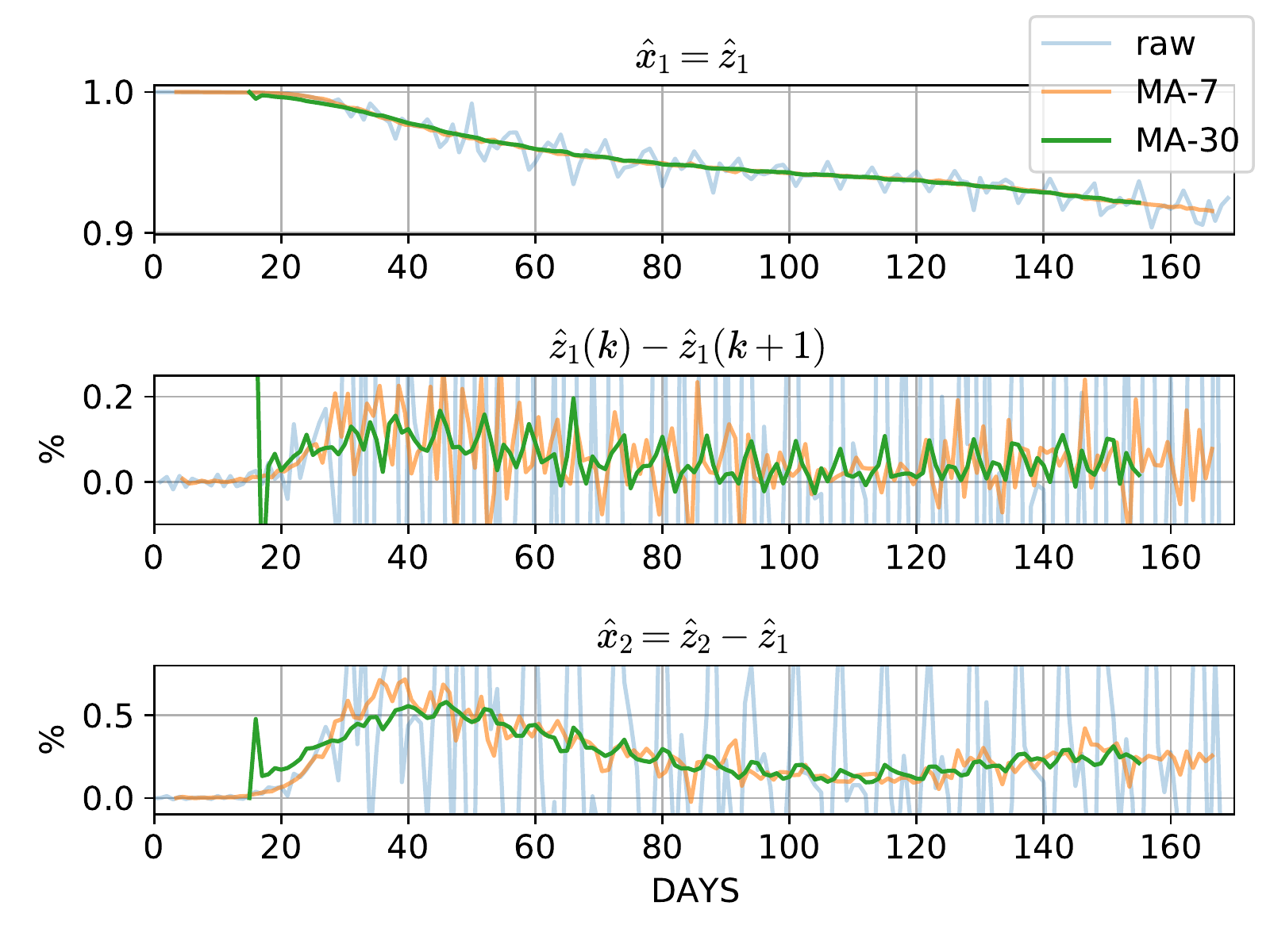}
  \caption{\label{fig:4}Estimates $\hat{x}_1 = \hat{z}_1$ and
    $\hat{x}_2 = \hat{z}_2 - \hat{z}_1$ produced by the
    filter~\eqref{eq:05}--\eqref{eq:008f} for United States data from
    Fig.~\ref{fig:1} for raw and centered moving averages with periods
    $7$ and $30$ days. Note how $\hat{x}_1 = \hat{z}_1$ is not
    monotically decreasing, and $\hat{x}_2 = \hat{z}_2 - \hat{z}_1$ and
    $\hat{z}_1(k) - \hat{z}_1(k+1)$ often become negative, even after
    smoothing.}
\end{figure}

One problem with the above approach is that, in the presence of noise,
there are no guarantees that the estimated variables $\hat{z}_1(k)$
and $\hat{z}_2(k)$ satisfy the structural
constraints~\eqref{eq:10}. Take for example the series of accumulated
COVID-19 deaths in the United States shown in Fig.~\ref{fig:1}
obtained from the COVID-19 Data Repository by the Center for Systems
Science and Engineering (CSSE) at Johns Hopkins
University~\cite{Dong2020}. The corresponding estimates $\hat{z}_1$
and $\hat{z}_2$ and the key differences $\hat{z}_1(k) -
\hat{z}_1(k+1)$ and $\hat{z}_2(k) - \hat{z}_1(k)$ are shown in
Fig.~\ref{fig:4}. Note how these estimates do not
satisfy~\eqref{eq:10}. What this means is that the resulting $\hat{R}$
estimate~\eqref{eq:05} will experience wild swings and even take
negative or very large values. In the example in Fig.~\ref{fig:2},
even the highly smoothed $30$-day moving average estimate swings
below~$0$ and above~$5$. The raw and $7$-day moving average estimates
are basically useless.

\begin{figure}
  \centering
  \includegraphics[width=.5\textwidth]{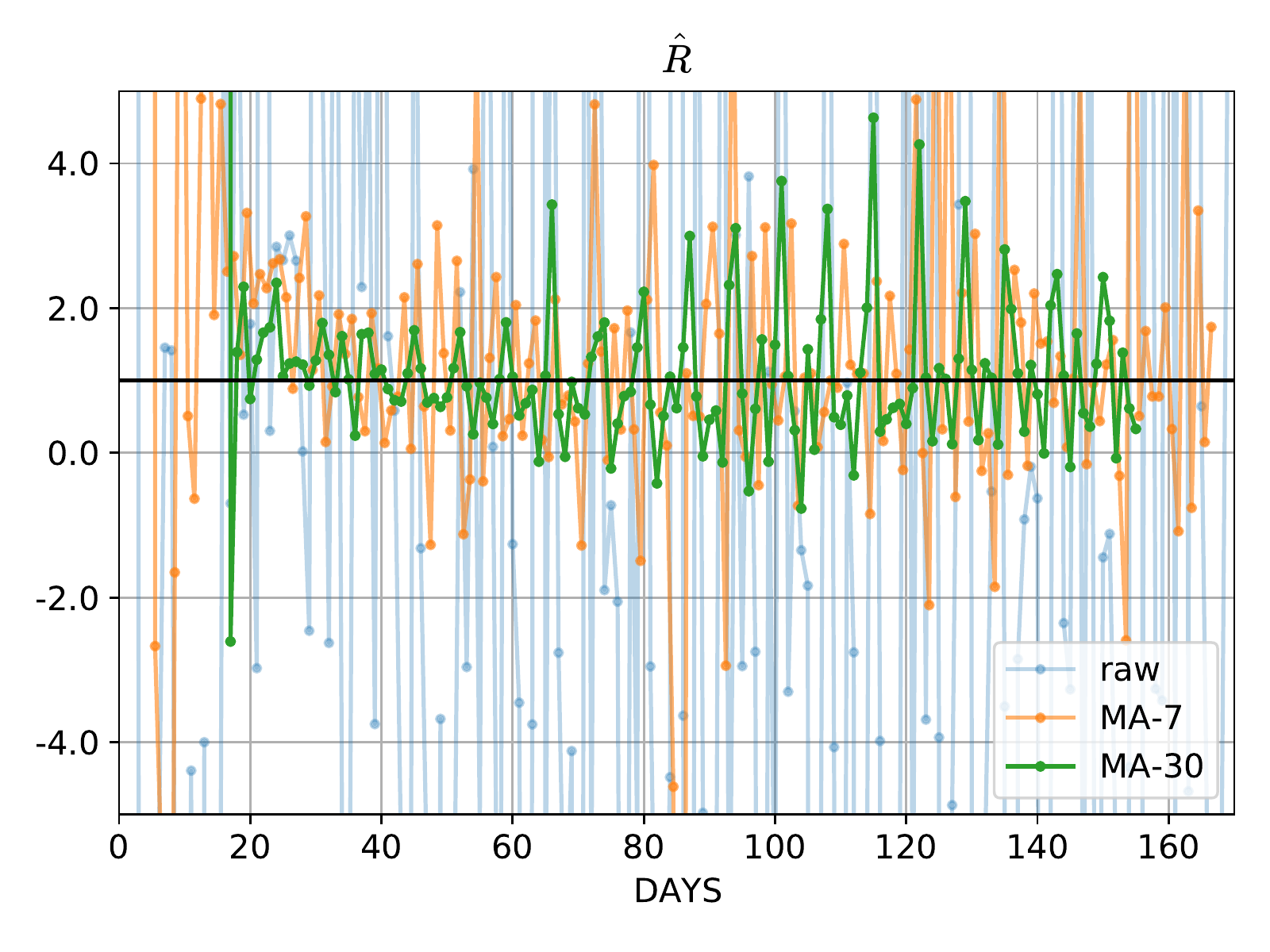}
  \caption{\label{fig:2}Unconstrained estimation of $R$ produced by
    the filter~\eqref{eq:05}--\eqref{eq:008f} corresponding to the
    United States deaths in Fig.~\ref{fig:1} based on raw and centered
    moving average with periods of $7$ and $30$ days. All estimates
    display wide excursion range that take negative and large positive
    values.}
\end{figure}

%% \begin{figure}
%%   \centering
%%   \includegraphics[width=12cm]{}
%%   \caption{\label{fig:3}Unconstrained estimates of $R$ from
%%     Fig.~\ref{fig:2} smoothed by a centered moving average filter with
%%     period of $7$ days.  }
%% \end{figure}

While smoothing the input does improve the quality of the estimates,
it does so at the expense of unavoidable loss of information as well
as additional lags from filtering. Indeed, the authors
of~\cite{FernandezVillaverde2020} explicitly mention that the above
procedure should be fed a smoothed out version of the signal
$y(k)$. Related approaches, such as~\cite{Buckman2020}, also seem to
rely heavily on smoothing.
%% Post-smoothing of the estimate $\hat{R}$, as in
%% Fig.~\ref{fig:3}, further helps to reveals some trends, especially on
%% the estimates obtained from the heavily smoothed $30$-day moving
%% average input.
These difficulties are the main motivation for the
alternative constrained approach to be introduced in the next section.

This section is closed by presenting a property that is not apparent
in~\cite{FernandezVillaverde2020}, given in the following proposition.
\begin{proposition}
  \label{prop:1}
  For $k \geq 2$ the estimate
  \begin{align}
    \hat{R}(k) &= \frac{\hat{z}_1(k) - \hat{z}_1(k+1)}{\gamma
      (\hat{z}_2(k) - \hat{z}_1(k))} 
  \end{align}
  in which $\hat{z}_1$ and $\hat{z}_2$ are the components of the
  output of the time-invariant linear
  filter~\eqref{eq:008a}--\eqref{eq:008d} driven by the
  input~\eqref{eq:008e} is independent of the value of fatality rate,
  $\delta$.
\end{proposition}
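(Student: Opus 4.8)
The key observation is that the estimate $\hat{R}(k)$ depends on $\delta$ only through the outputs $\hat{z}_1$ and $\hat{z}_2$ of the linear filter, and these in turn depend on $\delta$ only through the input $\hat{u}(k) = 1 - \delta^{-1} y(k)$. So the whole question reduces to understanding how the filter output transforms under an affine rescaling of its input.

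Specifically, note from \eqref{eq:02d} that $y(k) = \delta[1 - (x_1 + x_2 + x_3)(k)]$, so that $\hat{u}(k) = 1 - \delta^{-1} y(k) = (x_1 + x_2 + x_3)(k) = z_3(k)$ is actually *independent of $\delta$* already — but of course in practice $y(k)$ is the raw data and we do not get to "cancel" $\delta$ at the signal level. The real content is: replacing $\delta$ by $\delta'$ changes the input from $\hat{u}(k) = 1 - \delta^{-1} y(k)$ to $\hat{u}'(k) = 1 - (\delta')^{-1} y(k)$. Writing $\hat{u}'(k) = a\,\hat{u}(k) + b$ with $a = \delta/\delta'$ and $b = 1 - \delta/\delta'$, this is an affine map. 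By linearity of the filter \eqref{eq:008a}–\eqref{eq:008b}, the output under $\hat{u}'$ equals $a$ times the output under $\hat{u}$ plus the output under the constant input $b$. So the plan is: (i) compute the response of the filter to a constant input, (ii) track how the initialization \eqref{eq:008f} changes with $\delta$, and (iii) substitute into the ratio defining $\hat{R}(k)$ and check invariance.

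The main step is to show that a constant input $\hat{u} \equiv 1$ produces a constant output $\hat{z}_1 \equiv 1$, $\hat{z}_2 \equiv 1$ (equivalently, $(1,1)$ is the fixed point / DC gain of the filter in the appropriate sense). This should follow either by a direct check that $C(I-A)^{-1}B + D$ applied to $1$ gives $(1,1)^\top$, or more conceptually from the fact that $z_1 = z_2 = z_3 = 1$ is an equilibrium of the original discrete model \eqref{eq:04a}–\eqref{eq:04c}. Granting this, affinity gives $\hat{z}_i'(k) = a\,\hat{z}_i(k) + (1-a)\cdot 1$ for $i = 1,2$, provided the initial condition is chosen consistently — here one must also argue that $\hat{x}(0)$ as defined in \eqref{eq:008f} transforms the same way, which is where the requirement $k \geq 2$ enters: after two steps the effect of any fixed/initialization transient has propagated through the two-dimensional state so that $\hat{z}_i'(k) = a\,\hat{z}_i(k) + (1-a)$ holds exactly. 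Then in the numerator $\hat{z}_1'(k) - \hat{z}_1'(k+1) = a[\hat{z}_1(k) - \hat{z}_1(k+1)]$ (the constant $1-a$ cancels), and in the denominator $\hat{z}_2'(k) - \hat{z}_1'(k) = a[\hat{z}_2(k) - \hat{z}_1(k)]$ (again the constant cancels), so the factor $a = \delta/\delta'$ cancels in the ratio and $\hat{R}'(k) = \hat{R}(k)$.

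The hard part, or at least the part needing care, is the bookkeeping around the initial conditions and the role of $k \geq 2$: one must verify that with the prescribed initialization \eqref{eq:008f} the affine relation $\hat{z}_i'(k) = a\,\hat{z}_i(k) + (1-a)$ is not merely asymptotic but holds exactly for all $k \geq 2$, which uses that $A$ is nilpotent with $A^2 = 0$ (from \eqref{eq:008c}), so the state is a function of only the two most recent inputs and any initialization discrepancy is flushed after two steps. An alternative, cleaner route that sidesteps the transient entirely is to observe, as noted above, that the *intended* input is $\hat{u}(k) = z_3(k)$ regardless of $\delta$ — i.e. $\hat{u}$ is by construction $\delta$-independent when $y$ comes from the model — and then the proposition is immediate; but since the proposition is phrased in terms of the filter acting on an arbitrary input \eqref{eq:008e}, the affinity argument above is the appropriate level of generality, and I would present that.
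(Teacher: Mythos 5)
Your argument is correct and is in substance the paper's own proof: both rest on linearity of the filter, the nilpotency $A^2=0$ (which flushes the initial condition and explains the restriction $k\ge 2$), and the unit constant-input response $B+AB=Ce+D=e$, so that the $\delta$-dependent part of the output is an additive constant that cancels in the differences $\hat z_1(k)-\hat z_1(k+1)$ and $\hat z_2(k)-\hat z_1(k)$ while the remaining scalar factor cancels in the ratio. The only cosmetic difference is that you compare outputs for two values $\delta,\delta'$ via an affine relation, whereas the paper isolates the $\delta$-dependence once by splitting $\hat z(k)=\bar z(k)-\delta^{-1}\tilde z(k)$ into the response to the constant input $1$ and the response to $y$.
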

\begin{proof}
Use linearity to write
\begin{align*}
  \hat{z}(k) & = \bar{z}(k) - \delta^{-1} \tilde{z}(k)
\end{align*}
in which $\bar{z}$ and $\tilde{z}$ are such that
\begin{multline*}
  \bar{x}(k+1) = A \, \bar{x}(k) + B, \\
  \bar{z}(k) = C \, \bar{x}(k) + D, \quad
  \bar{x}(0) = x_0,
\end{multline*}
and
\begin{multline*}
  \tilde{x}(k+1) = A \, \tilde{x}(k) + B \, y(k), \\
  \tilde{z}(k) = C \, \tilde{x}(k) + D \, y(k), \quad
  \tilde{x}(0) = 0.
\end{multline*}
Then
\begin{align*}
  \hat{R}(k) &= \frac{\bar{z}_1(k) - \bar{z}_1(k+1) - \delta^{-1}
    (\tilde{z}_1(k) - \tilde{z}_1(k+1))}{\gamma \left (\bar{z}_2(k) -
    \bar{z}_1(k) - \delta^{-1} (\tilde{z}_2(k) - \tilde{z}_1(k)) \right
    )}.
\end{align*}
Because $A^i = 0$, $i \geq 2$ and $B + A B = C e + D = e$, where $e
\in \mathbb{R}^2$ is a vector of ones, for any $k \geq 2$,
\begin{align*}
  \bar{z}(k) &= C \bar{x}(k) + D = \bar{x}(k) = A^k x_0 + \sum_{i = 0}^{k-1} A^i B = e.
\end{align*}
Because $\bar{z}_1(k) = \bar{z}_1(k+1) = \bar{z}_2(k) = 1$ it follows
that
\begin{align*}
  \hat{R}(k) &= \frac{- \delta^{-1} (\tilde{z}_1(k) -
    \tilde{z}_1(k+1))}{- \delta^{-1} \gamma (\tilde{z}_2(k) -
    \tilde{z}_1(k))} = \frac{\tilde{z}_1(k) -
    \tilde{z}_1(k+1)}{\gamma (\tilde{z}_2(k) - \tilde{z}_1(k))}
\end{align*}
is independent of $\delta$ for $k \geq 2$.
\end{proof}

\section{Constrained Estimation}
\label{sec:indirect}

Some basic properties of the variables in the SRIDC model were listed
in~\eqref{eq:10}. In this section an alternative method for estimating
the model variables and the time-varying parameter $R(k)$ will be
introduced that allows one to enforce such and other constraints.

Consider the auxiliary linear time-invariant system
\begin{align}
  \label{eq:06a}
  z_1(k+1) &= z_1(k) - u(k) \\
  \label{eq:06b}
  z_2(k+1) &= z_2(k) - \gamma \, z_2(k) + \gamma z_1(k) \\
  \label{eq:06c}
  z_3(k+1) &= z_3(k) - \theta  \, ( z_3(k) - z_2(k) )
\end{align}
in which $u(k)$ is an input to be
determined. Equations~\eqref{eq:04a}--\eqref{eq:04c} and
\eqref{eq:06a}--\eqref{eq:06c} will have the exact same trajectories if
they have the same initial conditions and
\begin{align}
  \label{eq:07}
  u(k) &= \gamma \, R(k) (z_2(k) - z_1(k)).
\end{align}
This means that instead of estimating $R(k)$ from the nonlinear
model~\eqref{eq:04a}--\eqref{eq:04c} it is possible to
use~\eqref{eq:07} to calculate
\begin{align}
  \label{eq:08}
  \hat{R}(k) &= \frac{\hat{u}(k)}{\gamma \left (\hat{z}_2(k) -
    \hat{z}_1(k) \right )},
\end{align}
in which the input $\hat{u}$ and the variables $\hat{z}_1$ and
$\hat{z}_2$ are estimated from the linear time-invariant
model~\eqref{eq:06a}--\eqref{eq:06c}.

As it will be seen shortly, this alternative approach has several
advantages. First, the resulting estimation problem is a convex
problem that can be solved efficiently even with large number of data
points. Second, the basic constraints~\eqref{eq:10} are all linear
constraints that can be easily incorporated to the problem without
compromising convexity. Third, it is possible to add constraints that
will control the range of $R$ and its derivative, $\dot{R}$, as well
as explicitly promote smoothness of the estimates.

In order to arrive at the desired problem formulation first introduce
the estimator
\begin{align}
  \label{eq:08a}
  \hat{x}(k+1) &= A \, \hat{x}(k) + B \, \hat{u}(k)
\end{align}
in which
\begin{multline}
  \label{eq:08b}
  A =
  \begin{bmatrix}
    1 & 0 & 0 \\
    \gamma & (1 - \gamma) & 0 \\
    0 & \theta & (1 - \theta) 
  \end{bmatrix}, \\
  B =
  \begin{bmatrix}
    -1 \\ 0 \\ 0
  \end{bmatrix}, \qquad
  \hat{x}(k) =
  \begin{pmatrix}
    \hat{z}_1(k) \\ 
    \hat{z}_2(k) \\ 
    \hat{z}_3(k) \\ 
  \end{pmatrix}.
\end{multline}
Equations~\eqref{eq:08a}--\eqref{eq:08b} correspond to a state-space
representation of~\eqref{eq:06a}--\eqref{eq:06c} if $\hat{u}(k) =
u(k)$, $\hat{x}(k) = x(k)$, and $\hat{x}(0) = x(0)$. Consider also the
measurement~\eqref{eq:04d} and
\begin{align}
  \label{eq:08c}
  \hat{y}(k) &= 1 + C \, \hat{x}(k), &
  C &=
  \begin{bmatrix}
    0 & 0 & -1
  \end{bmatrix}.
\end{align}
Note that if $\hat{u}(k) = u(k)$ and $\hat{x}(0) = x(0)$ then $\delta
\, \hat{y}(k) = y(k)$. This motivates the introduction of the cost
function
\begin{align}
  \label{eq:11a}
  \phi_y(w) &=
  \frac{1}{N} \sum_{k = 0}^{N-1} q(k) \left ( y(k) - w(k)
  \right )^2
\end{align} 
and the associated optimal estimation problem
\begin{equation}
  \label{eq:11}
  \begin{aligned}
    \min_{\hat{y}, \hat{x}, \hat{u}} \quad & \phi_y(\delta \, \hat{y}) \\
    \text{s.t.} \quad
    & \hat{x}(k+1) = A \hat{x}(k) + B \hat{u}(k) \\
    & \hat{y}(k) = 1 + C \hat{x}(k) \\
    & \hat{u}(k) \geq 0, \quad k = 0, \cdots, N-1 \\
    & (\hat{x}, \hat{u})  \in \Omega    
  \end{aligned}
\end{equation}
in which $\Omega$ is a constraint set that will be detailed below. In
this paper, $\Omega$ will be comprised of linear constraints, hence
Problem~\eqref{eq:11} will be a convex optimization
problem~\cite{boyd:COP:2004}.

The objective of Problem~\eqref{eq:11} is to produce a non-negative
input $\hat{u}(k)$ that minimizes the weighted sum of squares of the
error between the available measurement $y(k)$, in this case the
deaths, and $\delta \, \hat{y}(k)$ produced by the dynamic
model~\eqref{eq:08a}--\eqref{eq:08b}, in the presence of the
additional convex constraints expressed in the set $\Omega$.

The weighting function $q(k)$ can be used to reflect the uncertainty
level of each measurement. For example, under the assumption that the
measurement noise can be modeled as a zero-mean and Gaussian white
noise process a natural choice would be $q(k) = v(k)^{-1}$, where
$v(k)$ is the variance of the measurement error at time
$k$~\cite{anderson:OFI:1979}. The above problem is a variation on a
standard finite-horizon linear quadratic optimal control
problem~\cite{Kwakernaak1972}.

The non-negativity constraint on $\hat{u}(k)$ follows from one of the
basic constraints in~\eqref{eq:10}. Indeed $\hat{u}(k) = \hat{z}_1(k)
- \hat{z}_1(k+1) \geq 0$. The remaining constraints in~\eqref{eq:10}
can be expressed in the form
\begin{multline*}
  \Omega_b = \big \{ (\hat{x}, \hat{u}) : \quad 0 \leq \hat{x}(k)
  \leq 1, \\ F \hat{x}(k) \leq 0, \quad k = 0, \cdots, N \big \},
\end{multline*}
in which the matrix
\begin{align*}
  F &=
  \begin{bmatrix}
    1 & -1 & 0 \\
    0 & 1 & -1
  \end{bmatrix}.
\end{align*}

\subsection{Constraints on $R$ and $\dot{R}$}
\label{sec:rconstr}

Constraints on the estimates of $R$ and $\dot{R}$ can be translated as
constraints on $\hat{x}$ and $\hat{u}$. The constraints discussed in
this section implicitly assume that $z_2 > z_1$, which will be the
case whenever $z_2 - z_1 = x_2 > 0$, that is whenever the number of
infected is still positive.

If lower- and upper-bounds on the value of $R \in [\underline{R},
  \overline{R}]$ are available then the same constraint applied on the
estimate~\eqref{eq:08} can be translated as
\begin{align*}
  \gamma \underline{R} (\hat{z}_2(k) - \hat{z}_1(k)) \leq \hat{u}(k) \leq \gamma \overline{R} \,
  (\hat{z}_2(k) - \hat{z}_1(k))
\end{align*}
which can be represented by the set of linear constraints
\begin{multline*}
  \Omega_R = \big \{ (\hat{x}, \hat{u}) : \quad \gamma \underline{R}
  \, G \hat{x}(k) \leq \hat{u}(k) \leq \gamma \overline{R} \, G
  \hat{x}(k), \\ k = 0, \cdots, N-1 \big \}
\end{multline*}
in which the matrix
\begin{align*}
  G &=
  \begin{bmatrix}
    -1 & 1 & 0 \\
  \end{bmatrix}.
\end{align*}
Note how important is to formulate the estimation problem in terms of
$R$ rather than $\beta$: the equivalent constraints in $\beta$ would be
nonlinear while the ones in $R$ are linear.

It is also useful to constrain the derivative of $R$, that is
$\dot{R}$, which is easier to manipulate in the continuous-time
version of model~\eqref{eq:06a}--\eqref{eq:06c}, namely
\begin{align*}
  \dot{z}_1 &= - u, \\
  \dot{z}_2 &= - \gamma \, ( z_2 - z_1 ), \\
  \dot{z}_3 &= - \theta  \, ( z_3 - z_2 ),
\end{align*}
from which
\begin{align*}
  R &= \frac{u}{\gamma (z_2 - z_1)}, &
  & \text{and} &
  \dot{R} &= \frac{\dot{u} (z_2 - z_1) - (\dot{z}_2 - \dot{z}_1) u}{\gamma (z_2 - z_1)^2}.
\end{align*}
If $R \in [\underline{R}, \overline{R}]$ then
\begin{align*}
  \frac{\dot{u} - \gamma (\dot{z}_2 - \dot{z}_1) \overline{R}}{\gamma
    (z_2 - z_1)}
  \leq
  \dot{R} 
  \leq \frac{\dot{u} - \gamma (\dot{z}_2 - \dot{z}_1) \underline{R}}{\gamma
    (z_2 - z_1)}
\end{align*}
Therefore, since $z_2 > z_1$, if
\begin{multline*}
  (\dot{z}_2 - \dot{z}_1) \overline{R} + \underline{\dot{R}} (z_2 - z_1)
  \leq
  \gamma^{-1} \dot{u} 
  \leq \\
  (\dot{z}_2 - \dot{z}_1) \underline{R} + \overline{\dot{R}} (z_2 - z_1),
\end{multline*}
then $\dot{R} \in [\underline{\dot{R}}, \overline{\dot{R}}]$. These
inequalities can be expressed approximately in terms of the
discrete-time variables in model~\eqref{eq:06a}--\eqref{eq:06c} upon
substituting
\begin{align*}
  \dot{u} &\approx u(k+1) - u(k), \\
  \dot{z}_2 - \dot{z}_1 &\approx z_2(k+1) - z_1(k+1) - z_2(k) + z_1(k)
  \\
  &\qquad \qquad= u(k) + \gamma (z_1(k) - z_2(k)),
\end{align*}
leading to the constraints on the estimates
\begin{multline*}
  (\gamma \overline{R} - \underline{\dot{R}}) \hat{z}_1(k)
  - (\gamma \overline{R} - \underline{\dot{R}}) \hat{z}_2(k) + \overline{R}
  \hat{u}(k) 
  \leq \\
  \gamma^{-1} (\hat{u}(k+1) - \hat{u}(k)) 
  \leq \\
  (\gamma \underline{R} - \overline{\dot{R}}) \hat{z}_1(k)
  - (\gamma \underline{R} - \overline{\dot{R}}) \hat{z}_2(k) + \underline{R}
  \hat{u}(k).
\end{multline*}
An extension is to have bounds on $\dot{R}(k)$ that vary depending on
$k$. This is especially useful to capture the higher uncertainties
associated with the beginning of the pandemic, a period when noisy
date might suggest a wider variation on $R$ and hence its
derivative. Such time dependent constraint can be represented by the
set
\begin{multline*}
  \Omega_{\dot{R}} = \big \{ (\hat{x}, \hat{u}) : \\ \gamma
  \underline{H}(k) \hat{x}(k) + \gamma \overline{R} u(k) \leq
  \hat{u}(k+1) - \hat{u}(k), \\
  \hat{u}(k+1) - \hat{u}(k) \leq \gamma \overline{H}(k) \hat{x}(k) +
  \gamma \underline{R} u(k), \\ k = 0, \cdots, N-1 \big \} 
\end{multline*}
in which 
\begin{align*}
  \underline{H}(k) &=
  \begin{bmatrix}
    (\gamma \overline{R} - \underline{\dot{R}}(k)) & - (\gamma
    \overline{R} - \underline{\dot{R}}(k)) & 0
  \end{bmatrix}, \\
  \overline{H}(k) &=
  \begin{bmatrix}
    (\gamma \underline{R} - \overline{\dot{R}}(k)) & - (\gamma
    \underline{R} - \overline{\dot{R}}(k)) & 0
  \end{bmatrix},
\end{align*}
and $\dot{R}(k) \in [\underline{\dot{R}}(k),
  \overline{\dot{R}}(k)]$. Even though the above constraints, having
been ported from the continuous-time model to the discrete-time model,
are approximations, they are very effective, as it will be illustrated
by examples later.

\subsection{Initial Condition}
\label{sec:initial-condition}

It is not necessary to have an estimate of the initial condition,
$\hat{x}(0)$, to solve Problem~\eqref{eq:11}. The optimal solution
will provide a suitable estimate of the initial condition.  However,
it is interesting to note that it is always possible to chose
$\hat{x}(0)$ so that $\hat{y}(k) = \delta^{-1} y(k)$ for $k = \{ 0, 1, 2
\}$ without further constraining $\hat{u}(k)$. Indeed, verify that
\begin{align*}
  C B = C A B = 0
\end{align*}
and that
\begin{align*}
  \begin{bmatrix}
    C  \\ C A \\ C A^2 
  \end{bmatrix}
  x(0)
  &= 
  \delta^{-1}
  \begin{pmatrix}
    y(0) \\
    y(1) \\
    y(2)
  \end{pmatrix}
  - e,
\end{align*}
in which $e \in \mathbb{R}^3$ is a vector of ones.  Because the above
coefficient matrix is the Observability Matrix~\cite{Kwakernaak1972}
associated with the pair $(A, C)$ which, for $A$ and $C$
from~\eqref{eq:08b} and~\eqref{eq:08c}, is square and non-singular, one
can calculate
\begin{align}
  \label{eq:mu0}
  \mu_0
  &= 
  \begin{bmatrix}
    C  \\ C A \\ C A^2 
  \end{bmatrix}^{-1}
  \begin{pmatrix}
    \delta^{-1} y(0) - 1 \\
    \delta^{-1} y(1) - 1 \\
    \delta^{-1} y(2) - 1
  \end{pmatrix}.
\end{align}
As discussed above, fixing $\hat{x}(0) = \mu_0$ may not lead to the
best possible overall estimate but one could incorporate such
knowledge, if desired, by adding the function
\begin{align}
  \label{eq:phiinitial}
  \phi_0(\hat{x}(0)) = \| \Pi (\hat{x}(0) - \mu_0) \|_2^2
\end{align}
to the cost of Problem~\eqref{eq:11}. Matrix~$\Pi$ can be used to
weigh the user confidence on the estimate $\mu_0$. The examples shown
in the present paper do not make use
of~\eqref{eq:phiinitial}. However, weighing prior knowledge on the
initial condition can be useful in the presence of additional
measurements, to be discussed in Section~\ref{sec:extensions}.

\subsection{Smoothness Cost}
\label{sec:smoothness}

The estimation Problem~\eqref{eq:11} takes the form of a finite
horizon optimal control problem~\cite{Kwakernaak1972}. However, a
typical finite horizon optimal control problems is often formulated
with two more types of costs: a penalty on the terminal state and a
direct penalty on the control cost, typically a measure of the energy
of the signal $\hat{u}(k)$. There are lots of good reasons for such
penalties to be part of the cost~\cite{Kwakernaak1972}. Here a penalty
on the signal $\hat{u}$ will be used as a way to promote smoothness of
the estimates.

Consider first a penalty on the terminal state. As it is typical of
discrete-time dynamic systems, the effects of the input signal
$\hat{u}$ may not appear in the output signal $\hat{y}$ until a number
of iterations has taken place. In the case of the
model~\eqref{eq:08a}--\eqref{eq:08c}, since $B C = B A C = 0$, it
takes at least two iterations for the input to show up at the
output. That is, the value of the input $\hat{u}(k)$ will only appear
in the output $\hat{y}(k+2)$. See also the discussion in
Section~\ref{sec:initial-condition}. This means that the final two
values of $\hat{u}(k)$ have no effect on the cost function of
Problem~\eqref{eq:11}. However, they will have an effect on the state,
which ultimately affects the estimate $\hat{R}(k)$. This means that
the last two estimates of $\hat{R}(k)$ should probably not be
trusted. This is equivalent to the two-step delay of the unconstrained
estimator discussed earlier in Section~\ref{sec:direct}.

In a typical control problem, a terminal cost ensures that, even in
the absence of measurements that can help determine $u(k)$ on those
final instants, the final state is steered toward a \emph{desired}
state. However, in Problem~\eqref{eq:11} there does not seem to be a
clear choice of a desired state, unless the analysis pertains to past
events in which the disease has already reached equilibrium and
information on the equilibrium state is available. For this reason, in
the context of the COVID-19 epidemic, no terminal
constraint shall be imposed.

As for a running cost on the signal $\hat{u}$, solutions to
Problem~\eqref{eq:11} are likely to still produce a signal
$\hat{u}(k)$ that can have significant variations, even after imposing
the constraints discussed so far. Indeed, the very nature of the
Problem~\eqref{eq:11} is to produce an optimal $\hat{u}(k)$ that will
do its best to capture the variations implied by a
changing~$y(k)$. However, as far as estimating $R(k)$, it may not be
important to capture every single variation, but rather to smooth out
the trends. This goal is achieved by adding the smoothness cost
\begin{multline}
  \label{eq:smooth}
  \phi_s(\hat{u}) = \\
  \frac{1}{N \!-\! 2} \left ( r(0) | \hat{u}(0) |^2 + \!
  \sum_{k = 1}^{N-3} r(k) \left | \hat{u}(k) -
  \hat{u}(k-1) \right |^2 \right )
\end{multline}
which penalizes the total variations of $\hat{u}$ measured at
consecutive samples. This cost function promotes smoothness of
$\hat{u}(k)$, which in turns promotes smoothness of $\hat{R}(k)$, as
it will be seen in the examples. Note also that~\eqref{eq:smooth} does
not penalize the last two values of $\hat{u}(k)$ since, as discussed
above, they do not affect the measurements at $k = N-2$ and $N-1$.

\subsection{Trading-off Accuracy Versus Smoothness}
\label{sec:tradeoff}

\begin{figure}
  \includegraphics[width=\columnwidth]{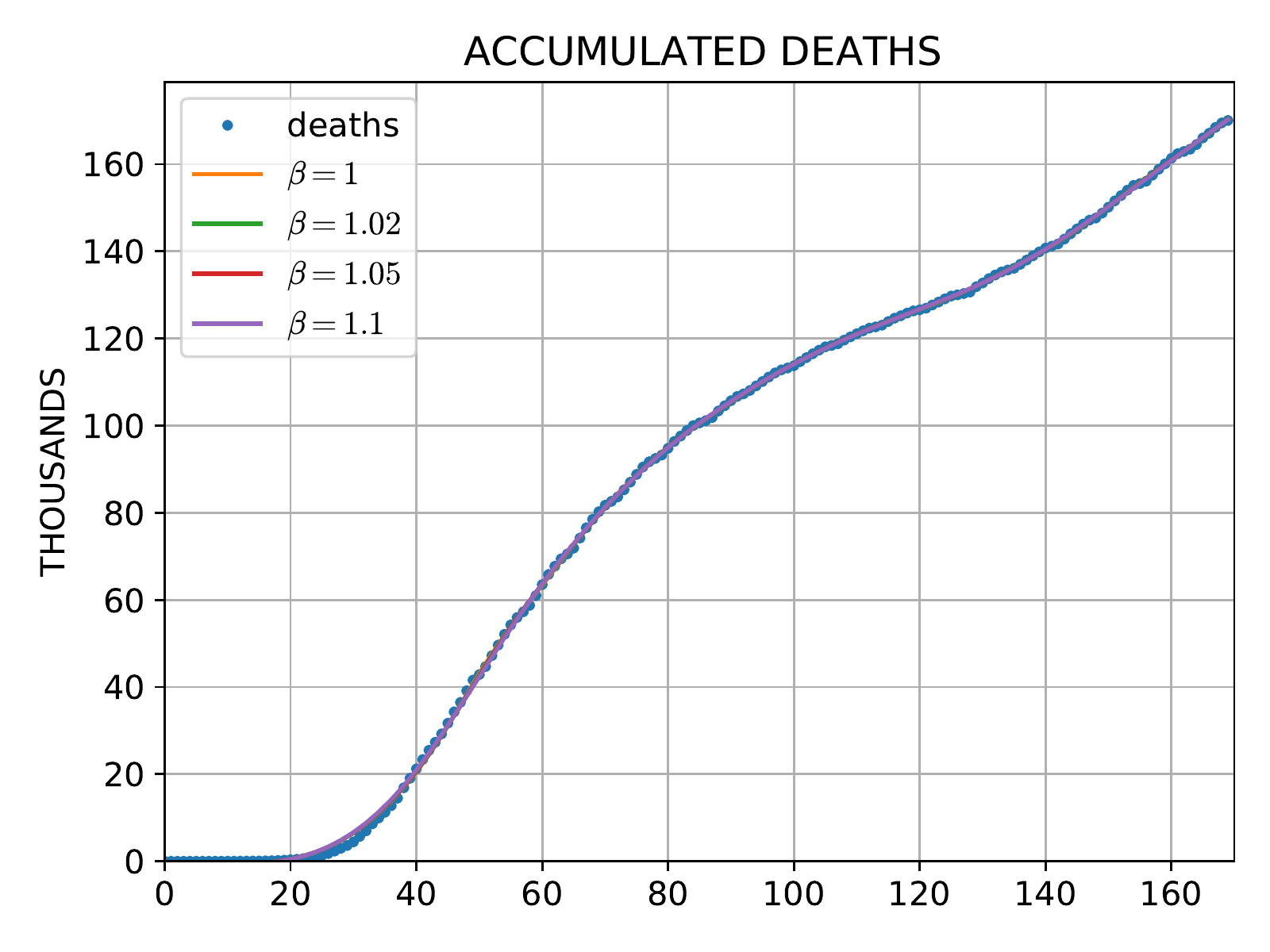}

  \hfill
  \includegraphics[width=\columnwidth]{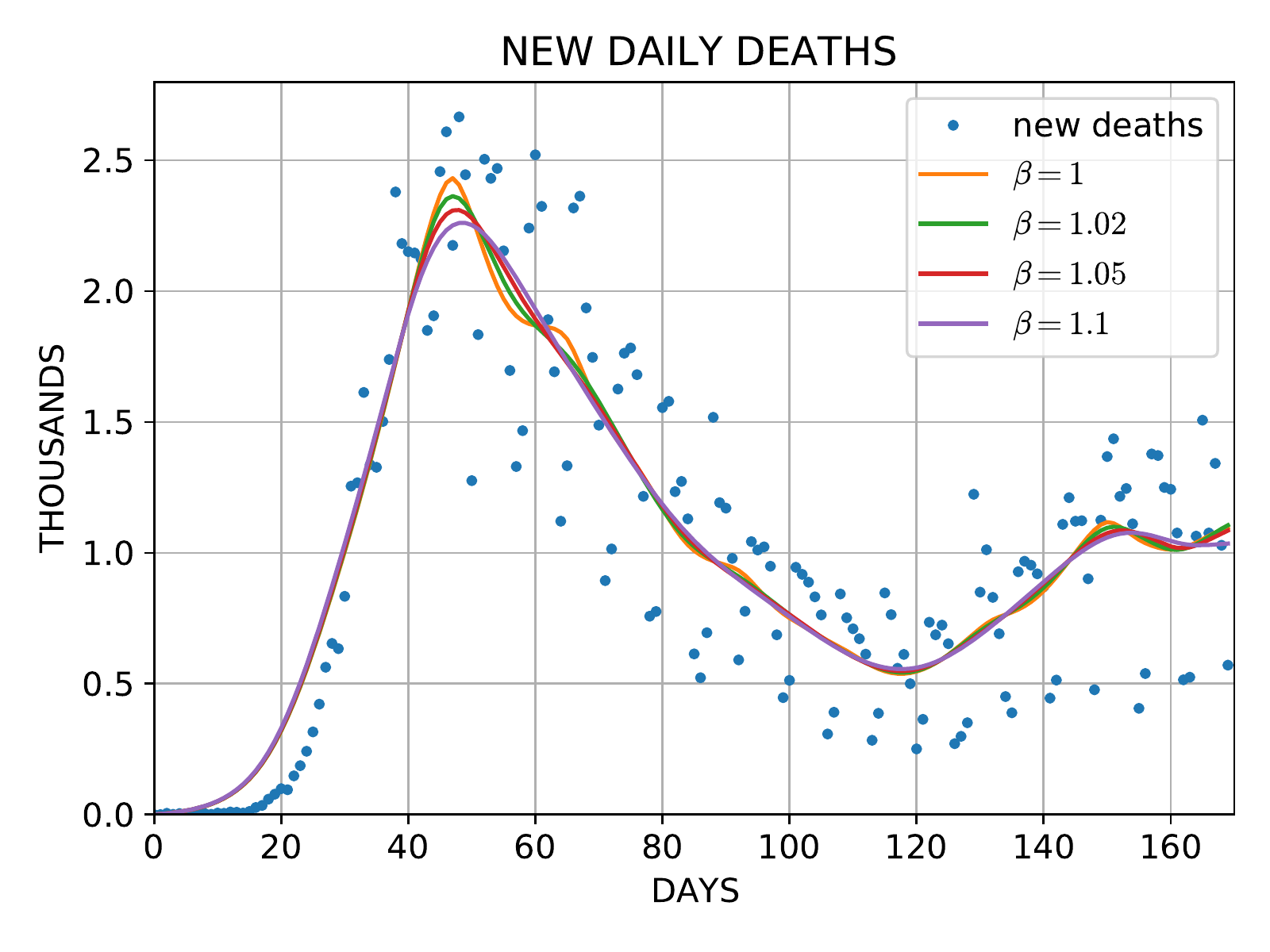}
  
  \caption{\label{fig:5}Accumulated and new daily deaths attributed to
    COVID--19 in the United States from 01/22/2020 through
    08/16/2020~\cite{Dong2020}. Day $0$ corresponds to the first day
    in this range in which a non-zero number of deaths was
    reported. Also shown is the corresponding constrained estimates
    produced by solving Problem~\eqref{eq:15} for various values of
    $\beta$. The solution to Problem~\eqref{eq:11} corresponds to
    $\beta = 1$.}
\end{figure}

\begin{figure}
  \includegraphics[width=\columnwidth]{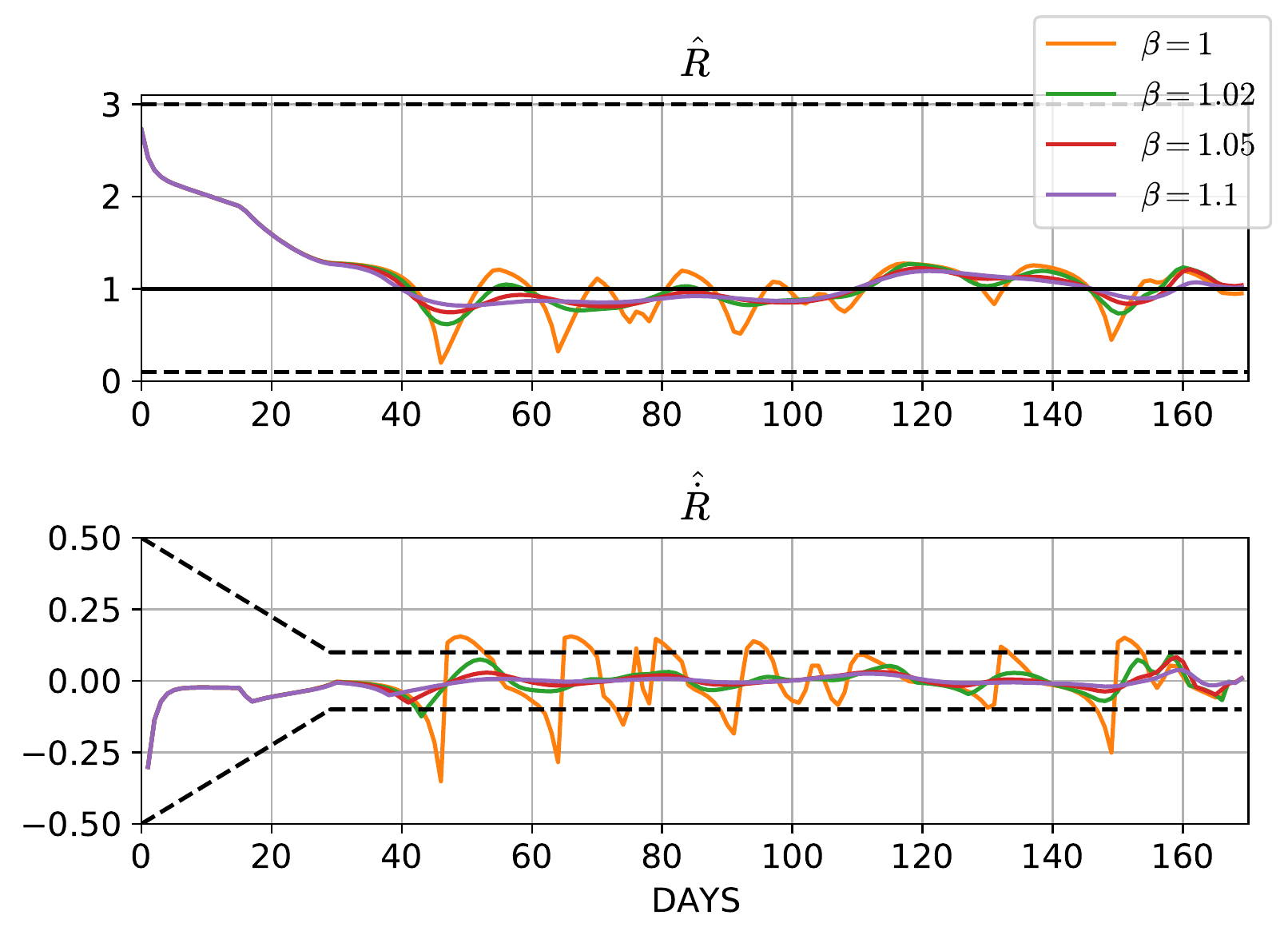}
 
  \caption{\label{fig:6}Constrained estimates produced by solving
    Problem~\eqref{eq:15} for various values of $\beta$ for the data
    in Fig.~\ref{fig:5}. The constraints imposed on the estimated $R$
    and $\dot{R}$ are shown by the dashed lines.}
\end{figure}

\begin{figure}
  \centering
  \includegraphics[width=\columnwidth]{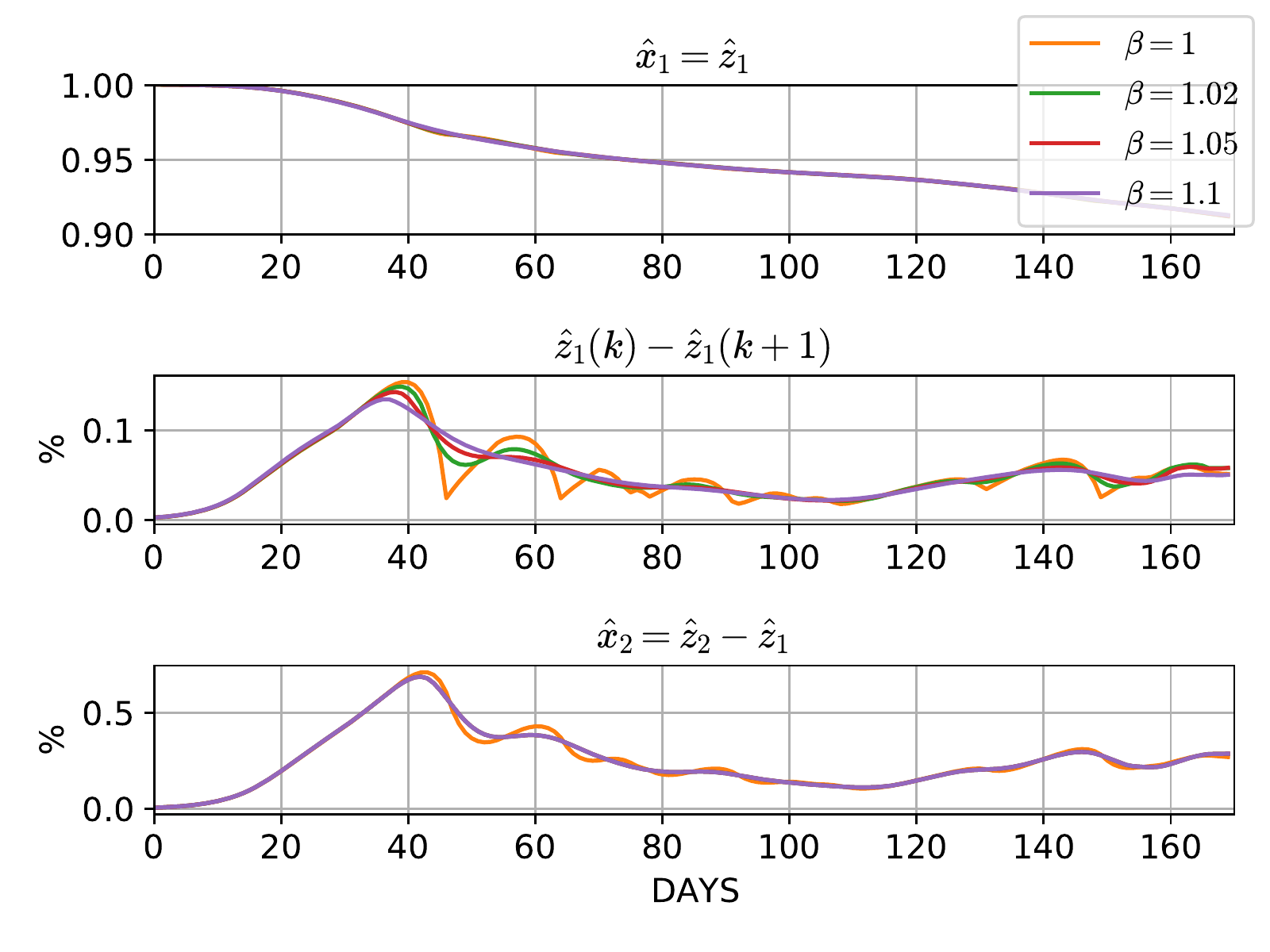}
  \caption{\label{fig:8}Constrained estimates $\hat{x}_1 = \hat{z}_1$
    and $\hat{x}_2 = \hat{z}_2 - \hat{z}_1$ produced by solving
    Problem~\eqref{eq:15} for various values of $\beta$ for the data
    in Fig.~\ref{fig:5}. Note how $\hat{x}_1 =
    \hat{z}_1$ is monotonically decreasing and remains below one, and
    $\hat{x}_2 = \hat{z}_2 - \hat{z}_1$ and $\hat{z}_1(k) -
    \hat{z}_1(k+1)$ remain positive.}
\end{figure}

This section will illustrate how the optimization problem and
the constraints discussed so far can be used to produce smooth
estimates of the SIRDC model variables and the parameter $R$. Let us
start by solving Problem~\eqref{eq:11} for the United States data
shown before in Fig.~\ref{fig:1}. The following constant parameters
were used:
\begin{align*}
  \gamma &= 0.2, &
  \theta &= 0.1, &
  \delta &= 0.065, &
  \overline{R} &= 3, &
  \underline{R} &= 0.1,
\end{align*}
with the constraint set
\begin{align*}
  \Omega = \Omega_b \cap \Omega_{R} \cap \Omega_{\dot{R}}.
\end{align*}
The derivative of $R$, $\dot{R}$, was constrained by the
time-dependent bounds
\begin{align*}
  \underline{\dot{R}}(k) &= - \overline{\dot{R}}(k), \\
  \overline{\dot{R}}(k) &= \begin{cases}
    0.5 (k/30) + 0.1 (1 - k/30), & k \leq 30, \\
    0.1, & k > 30,
  \end{cases}
\end{align*}
which allows larger variations at the beginning of the epidemic, and a
constant weight
\begin{align*}
  q(k) &= 1, & k &= 0, \cdots, N-1,
\end{align*}
was used in the cost function. No penalty on the initial condition was
imposed.

The estimated deaths and new daily deaths and the corresponding
estimates for $R$ and $\dot{R}$ obtained by Problem~\eqref{eq:11} are
shown in Fig.~\ref{fig:5} and~\ref{fig:8}, with the label
$\beta = 1$. Note how the constraints on $R$ are enforced at all times
while the constraints on $\dot{R}$ are approximately enforced, as
discussed in Section~\ref{sec:rconstr}. All numerical examples in this
paper were formulated using CVXPY~\cite{Diamond2016} and solved using
MOSEK's conic solver~\cite{Andersen2003}.

Enforcing the constraints on the model variables and parameters during
the estimation process ensures that the estimates produced are much
better behaved and smoother when compared with the estimates obtained
by the unconstrained estimation methods of
Section~\ref{sec:direct}. The smoothness of the estimate can be
further enhanced by incorporating a smoothness cost as discussed in
Section~\ref{sec:smoothness}. As it is customary, one could modify
Problem~\eqref{eq:11} by replacing its cost function by
\begin{align*}
  \phi_y(\delta \, \hat{y}) + \eta \, \phi_s(\hat{u})
\end{align*}
where $\eta > 0$ is a penalty parameter. The correct tuning of the
parameter $\eta$ can however be tricky. Instead, perform the
following two step procedure:
\begin{enumerate}
\item Solve the convex optimization Problem~\eqref{eq:11} and
  determine its global optimal solution and cost. Let $\rho^*$ be the
  minimal cost.
\item Select $\beta \geq 1$ and solve the convex quadratic
  optimization problem with linear and convex quadratic constraints
  \begin{equation}
    \label{eq:15}
    \begin{aligned}
      \min_{\hat{y}, \hat{x}, \hat{u}} \quad & \delta^2 \, \phi_s(\hat{u}) \\
      \text{s.t.} \quad
      & \hat{x}(k+1) = A \hat{x}(k) + B \hat{u}(k) \\
      & \hat{y}(k) = C \hat{x}(k) + E r(k) \\
      & \hat{u}(k) \geq 0, \quad k = 0, \cdots, N-1 \\
      & (\hat{x}, \hat{u})  \in \Omega \\
      & \phi_y(\delta \, \hat{y}) \leq \beta \, \rho^*
    \end{aligned}
  \end{equation}
\end{enumerate}
The reason for scaling the cost function by the square of the fatality
rate $\delta$ will be made clear in Section~\ref{sec:fatality}.

The parameter $\beta \geq 1$ can be interpreted as how much accuracy
one is willing to trade for smoothness. Indeed when $\beta = 1$,
Problems~\eqref{eq:11} and~\eqref{eq:15} admit the exact same optimal
solution. However, as $\beta$ increases, smoother solutions are
possible at the expense of a higher estimation error. In the case of
the United States COVID-19 data, the estimates produced with $\beta =
\{ 1.02, 1.05, 1.1 \}$ are also shown in
Figs.~\ref{fig:5}--\ref{fig:8}.

The impact of the value of $\beta$ is still data dependent. Indeed,
the more noise is present in the data the less smooth one would expect
the solution to Problem~\eqref{eq:11} to be, and the higher one might
need to set $\beta$ for the desired level of smoothness.  Note also
that as the penalty $\beta$ increases the constraints on $R$ and its
derivative becomes less and less active. However, solving
Problem~\ref{eq:11} without these constraints would make the choice of
$\beta$ much more difficult, as the value of the cost function of
Problem~\ref{eq:11} is allowed to be reduced further by increasingly
less smooth solutions. By enforcing these constraints earlier in
Problem~\eqref{eq:11} it is found that a choice of $\beta \in [1, 2]$
is enough to produce suitable solutions for data from diverse
countries, to be presented in Section~\ref{sec:discussion}.

\subsection{Independence of the Fatality Rate}
\label{sec:fatality}

The constrained estimator obtained as the solution to the optimization
problems~\eqref{eq:11} and~\eqref{eq:15} enjoy the same independence
of the fatality rate, $\delta$, as the unconstrained estimator from
Section~\ref{sec:direct}. The following result is analogous to
Proposition~\ref{prop:1}. Its proof reveals the need for the scaling of
the objective function in Problem~\eqref{eq:15}.
\begin{proposition}
  \label{prop:2}
  The estimate
  \begin{align}
    \hat{R}(k) &= \frac{\hat{u}(k)}{\gamma
      (\hat{z}_2(k) - \hat{z}_1(k))} 
  \end{align}
  in which $\hat{z}_1$ and $\hat{z}_2$ are obtained as solutions to
  Problem~\eqref{eq:11} or Problem~\eqref{eq:15} is independent of the
  value of the fatality rate, $\delta$.
\end{proposition}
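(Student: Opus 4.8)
The plan is to mimic the proof of Proposition~\ref{prop:1}: exploit the linearity of the dynamics in $(\hat{x},\hat{u})$ and show that a rescaling of the decision variables maps an optimal solution for one value of $\delta$ to an optimal solution for another. Write $\delta' = \lambda \delta$ for some $\lambda > 0$, and suppose $(\hat{y},\hat{x},\hat{u})$ is feasible (resp.\ optimal) for the problem instantiated with $\delta$. The natural candidate for a feasible point of the $\delta'$-instance is obtained by shifting and scaling the state. Concretely, decompose $\hat{x}(k) = e + \tilde{x}(k)$ where $e\in\mathbb{R}^3$ is the vector of ones: since $A e = e$ (each row of $A$ in~\eqref{eq:08b} sums to one) and $C e = -1$, the constant $e$ is the free response that produces $\hat{y}\equiv 0$, and $\tilde{x}$ carries the part of the state that is actually driven by $\hat{u}$. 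Then $\delta\,\hat{y}(k) = \delta(1 + C\hat{x}(k)) = \delta\,C\tilde{x}(k)$, and $y(k)-\delta\hat{y}(k) = y(k) - \delta\,C\tilde{x}(k)$, so the fit term $\phi_y(\delta\hat{y})$ depends on $\delta$ and $\tilde{x}$ only through the product $\delta\,\tilde{x}$.

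Next I would define the rescaled variables $\tilde{x}'(k) = \lambda^{-1}\tilde{x}(k)$, $\hat{u}'(k) = \lambda^{-1}\hat{u}(k)$, and $\hat{x}'(k) = e + \tilde{x}'(k)$, and check feasibility for the $\delta'$-problem. The dynamics $\hat{x}(k+1)=A\hat{x}(k)+B\hat{u}(k)$ are affine: subtracting the fixed point $e = Ae$ gives $\tilde{x}(k+1) = A\tilde{x}(k)+B\hat{u}(k)$, which is linear, so $\tilde{x}'$ and $\hat{u}'$ satisfy it as well. The non-negativity $\hat{u}'(k)\geq 0$ is preserved since $\lambda>0$. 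The fit term satisfies $\phi_y(\delta'\hat{y}') = \phi_y(\lambda\delta\cdot\lambda^{-1}\hat{y}_{\tilde{x}}) = \phi_y(\delta\hat{y})$ with the same $q(k)$, so the value of the fit is unchanged — and for Problem~\eqref{eq:15} this also shows the constraint $\phi_y(\delta'\hat{y}')\leq\beta\rho^*$ is preserved provided $\rho^*$ is itself $\delta$-independent, which follows by applying the same argument to Problem~\eqref{eq:11}. The genuinely $\delta$-dependent point is the objective of~\eqref{eq:15}: $\delta'^{\,2}\phi_s(\hat{u}') = \lambda^2\delta^2\phi_s(\lambda^{-1}\hat{u}) = \delta^2\phi_s(\hat{u})$ because $\phi_s$ is quadratic and homogeneous of degree two in $\hat{u}$ — this is exactly why the $\delta^2$ scaling was inserted in~\eqref{eq:15}, and the computation should be displayed as the remark the proposition promises. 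The last thing to verify is that the constraint set $\Omega$ is invariant under the map $(\hat{x},\hat{u})\mapsto(e+\lambda^{-1}(\hat{x}-e),\lambda^{-1}\hat{u})$: the box $0\leq\hat{x}(k)\leq 1$ is \emph{not} literally invariant, but the constraints that matter for the conclusion — $F\hat{x}(k)\leq 0$ in $\Omega_b$, and all of $\Omega_R$ and $\Omega_{\dot R}$ — involve only differences $\hat{z}_2-\hat{z}_1$ (through $F$, $G$, $\underline H$, $\overline H$) paired against $\hat{u}$, and differences annihilate the shift $e$, so these homogeneous-in-$\tilde{x}$-and-$\hat{u}$ constraints are preserved.

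With feasibility and equality of objectives in both directions (the map is invertible, with inverse the rescaling by $\lambda$), optimality transfers: an optimizer of the $\delta$-instance maps to an optimizer of the $\delta'$-instance, and the optimal $R$-estimate is
\[
  \hat{R}'(k) = \frac{\hat{u}'(k)}{\gamma(\hat{z}_2'(k)-\hat{z}_1'(k))}
  = \frac{\lambda^{-1}\hat{u}(k)}{\gamma(\hat{z}_2(k)-\hat{z}_1(k))\,\lambda^{-1}}
  = \hat{R}(k),
\]
since both numerator and denominator of~\eqref{eq:08} are homogeneous of the same degree in $(\tilde{x},\hat{u})$. Hence $\hat{R}$ does not depend on $\delta$.

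The main obstacle I anticipate is the $\Omega_b$ box constraint $0\leq\hat{x}(k)\leq 1$: it is the one piece of $\Omega$ that does not scale, so the cleanest statement is that the proposition holds for the version of $\Omega$ built from $\Omega_b$'s \emph{differencing} inequalities $F\hat{x}(k)\leq 0$ together with $\Omega_R\cap\Omega_{\dot R}$ — which is precisely the $\Omega$ used in the paper's examples — while the absolute box would need to be read as $\delta$-scaled bounds. A secondary point to get right is the handling of the initial condition: since $\hat{x}(0)$ is a free variable in~\eqref{eq:11}, no extra care is needed, but if the penalty~\eqref{eq:phiinitial} were included one would have to rescale $\mu_0$ accordingly, consistent with its definition~\eqref{eq:mu0} where $y$ enters only as $\delta^{-1}y$.
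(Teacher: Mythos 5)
Your proposal follows essentially the same route as the paper's proof: decompose the state about the fixed point $e$ (using $A e = e$ and $C e = -1$), observe that the fit term depends on $\delta$ and the state only through the product $\delta\,(\hat{x}-e)$, that the dynamics and the constraints built from $F$, $G$, $\underline{H}$, $\overline{H}$ are homogeneous in $(\hat{x}-e,\hat{u})$ because these matrices annihilate $e$, that the $\delta^2$ factor in~\eqref{eq:15} exactly cancels the degree-two homogeneity of $\phi_s$, and that $\hat{R}$ in~\eqref{eq:08} is a ratio of two quantities that scale identically. This is precisely the paper's construction $\hat{x}_\lambda(k)=\lambda\,\hat{x}_\delta(k)+(1-\lambda)\,e$, $\hat{u}_\lambda(k)=\lambda\,\hat{u}_\delta(k)$, and your remarks on the free initial condition and on $\rho^*$ being $\delta$-independent also match the paper.

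The one genuine gap is your treatment of the box constraint $0\leq\hat{x}(k)\leq 1$. You correctly note it is not invariant under a two-sided rescaling, but your remedy --- dropping it from $\Omega$ --- weakens the proposition: the paper's $\Omega=\Omega_b\cap\Omega_R\cap\Omega_{\dot R}$, including in the numerical examples, does contain the box. The paper's device is to restrict to $0<\lambda\leq 1$, so that $\hat{x}_\lambda$ is a convex combination of $\hat{x}_\delta\in[0,1]^3$ and $e\in[0,1]^3$ and therefore remains in the box. This gives feasibility of the rescaled candidate only in the direction of increasing the fatality rate ($\delta\mapsto\lambda^{-1}\delta$); since the map is then not a bijection between feasible sets, the reverse direction cannot be obtained by inverting it, and the paper instead argues directly that $\min\phi_y(\lambda^{-1}\delta\,\hat{y})\geq\min\phi_y(\delta\,\hat{y})$, covering all fatality rates by taking the baseline $\delta$ small enough. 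If you adopt this one-sided construction your argument goes through with the full $\Omega$; note, however, that the reverse optimal-value inequality is the delicate step (the paper's pointwise bound $\phi_y(\lambda^{-1}z)\geq\phi_y(z)$ relies on $\|z\|_Q^2\geq 2\,z^TQy$, which needs justification for the relevant $z=\delta\,\hat{y}$), so your instinct that the two directions are not symmetric is well founded.
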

\begin{proof}
  Let $\hat{y}_\delta$, $\hat{x}_\delta$ and $\hat{u}_\delta$ be the 
  optimal solution to Problem~\eqref{eq:11} for some $\delta > 0$,
  that is
  \begin{align*}
    (\hat{y}_\delta, \hat{x}_\delta, \hat{u}_\delta) &= \arg
    \min_{(\hat{y}, \hat{x}, \hat{u}) \in \hat{\Omega}}
    \phi_y(\delta \, \hat{y})
  \end{align*}
  in which
  \begin{multline*}
    \hat{\Omega} = \big \{ (\hat{y}, \hat{x}, \hat{u}) : \quad
    \hat{x}(k+1) = A \hat{x}(k) + B \hat{u}(k), \\
    \hat{y}(k) = 1 + C \hat{x}(k), \quad   
    \hat{u}(k) \geq 0, \quad k = 1, \cdots, N-1, \\
    (\hat{x}, \hat{u}) \in \Omega = \Omega_b \cap
    \Omega_R \cap \Omega_{\dot{R}} \big \},
  \end{multline*}
  for $A$, $B$, and $C$ from~\eqref{eq:08b}--\eqref{eq:08c}. Since
  $\delta$ only affects the value of the cost function, there will
  exist an optimal solution for any $\delta > 0$ as long as
  $\hat{\Omega} \neq \emptyset$.

  Now let $0 < \lambda \leq 1$, and calculate
  \begin{multline*}
    \hat{x}_\lambda(k+1) = A \, \hat{x}_\lambda(k) + B \, \hat{u}_\lambda(k), \\
    \bar{x}_\lambda(0) = \lambda \, \hat{x}_\delta(0) + (1 -
    \lambda) \, e, \quad
    \hat{u}_{\lambda}(k) = \lambda \, \hat{u}_{\delta}(k),
  \end{multline*}
  %% Use linearity to split
  %% \begin{align*}
  %%   \hat{x}_\delta(k) &= \bar{x}_\delta(k) + \tilde{x}_\delta(k),
  %% \end{align*}
  %% in which
  %% \begin{align*}
  %%   \bar{x}_\delta(k+1) &= A \, \bar{x}_\delta(k), &
  %%   \tilde{x}_\delta(k+1) &= A \, \tilde{x}_\delta(k) + B \, \hat{u}(k), &
  %%   \bar{x}_\delta(0) &= \hat{x}_\delta(0), & 
  %%   \tilde{x}_\delta(k) &= 0.
  %% \end{align*}
  %% Now let $0 < \lambda \leq 1$, set
  %% \begin{align*}
  %%   \bar{x}_\lambda(0) &= \lambda \, \hat{x}_\delta(0) + (1 -
  %%   \lambda) \, e, &
  %%   \hat{u}_{\lambda}(k) &= \lambda \, \hat{u}_{\delta}(k),
  %% \end{align*}
  in which $e \in \mathbb{R}^3$ is a vector of ones, and use the fact
  that $A^k e = e$, $k = 0, 1, \cdots$, to show that
  \begin{align*}
    \hat{x}_{\lambda}(k)
    %% &= \lambda \, \hat{x}_\delta(k) + \lambda \,
    %% \tilde{x}_{\delta}(k) + (1 - \lambda) \, e
    &= \lambda \,
    \hat{x}_\delta(k) + (1 - \lambda) \, e.
  \end{align*}
  Since $0 \leq \hat{x}_\delta(k) \leq e$ and $0 < \lambda \leq 1$
  \begin{align*}
    0 \leq \hat{x}_{\lambda}(k) \leq e.
  \end{align*}
  Furthermore $F e = 0$ so that
  \begin{align*}
    F \hat{x}_{\lambda}(k) = \lambda \, F \hat{x}_\delta(k) \leq 0
  \end{align*}
  and $\hat{x}_\lambda(k) \in \Omega_b$. Likewise, because $G \, e = 0$,
  $\underline{H}(k) \, e = \overline{H}(k) \, e = 0$, $\hat{x}_\lambda(k)
  \in \Omega_R \cap \Omega_{\dot{R}}$. Finally, using the fact that $C
  \, e = -1$,
  \begin{multline*}
    \hat{y}_{\lambda}(k)
    = 1 + C \hat{x}_{\lambda}(k) =
    \\ 1 + (1 - \lambda) \, C e + \lambda \, C \hat{x}_\delta(k)
    = \lambda \, \hat{y}_\delta(k),
  \end{multline*}
  from which one concludes that $(\hat{y}_\lambda, \hat{x}_\lambda,
  \hat{u}_\lambda) \in \hat{\Omega}$.

  Note that for any $y$, $z$ and $0 < \lambda \leq 1$ it is true that
  \begin{align*}
    N \phi_{y}(\lambda^{-1} z) &= 
    \| y - \lambda^{-1} z \|_Q^2
    %% &= \lambda^{-2} \| \delta y \|^2
    %% - 2 \lambda^{-1} \delta x^T y + \| x \|^2 \\
    \\
    &\geq \lambda^{-1} \left ( \| z \|_Q^2 - 2 \, z^T Q y \right ) + \| y \|_Q^2 
    %% &\geq \| \delta y \|^2 - 2 \delta x^T y + \| x \|^2
    \\
    &\geq \| y - z \|_Q^2
    = N \phi_y(z),
  \end{align*}
  in which $Q = \operatorname{diag}(q(0),
  \cdots, q(N-1))$ and $\| x \|_Q^2 = x^T Q \, x$, so that
  \begin{align*}
    \min_{(\hat{y}, \hat{x}, \hat{u}) \in \hat{\Omega}}
    \phi_{y}(\lambda^{-1} \delta \, \hat{y}) 
    &\geq
    \min_{(\hat{y}, \hat{x}, \hat{u}) \in \hat{\Omega}}
    \phi_y(\delta \, \hat{y})
    =
    \phi_y(\delta \, \hat{y}_\delta).
  \end{align*}
  On the other hand, since $(\hat{y}_\lambda, \hat{x}_\lambda,
  \hat{u}_\lambda) \in \hat{\Omega}$,
  \begin{align*}
    \min_{(\hat{y}, \hat{x}, \hat{u}) \in \hat{\Omega}}
    \phi_{y}(\lambda^{-1} \delta \, \hat{y})
    \leq 
    \phi_{y}(\lambda^{-1} \delta \, \hat{y}_\lambda)
    =
    \phi_y(\delta \, \hat{y}_\delta).
  \end{align*}
  Combining these two inequalities it is possible to conclude that
  \begin{align*}
    \min_{(\hat{y}, \hat{x}, \hat{u}) \in \hat{\Omega}}
    \phi_{y}(\lambda^{-1} \delta \, \hat{y}) 
    =
    \phi_y(\delta \, \hat{y}_\delta), \quad \text{for all } 0 < \lambda
    \leq 1,
  \end{align*}
  which proves the proposition for Problem~\eqref{eq:11} since the
  above discussion holds for any small enough $\delta > 0$.

  As for Problem~\eqref{eq:15}, the above discussion means that the
  constraint
  \begin{align*}
    \phi_{y}(\delta \, \hat{y}) \leq \beta \, \rho^*
  \end{align*}
  will be unaffected by the choice of $\delta$ since $\rho^*$ is
  independent of $\delta$. Therefore, an argument similar to the
  one used for Problem~\eqref{eq:15} in which
  \begin{multline*}
    (\hat{y}_\delta, \hat{x}_\delta, \hat{u}_\delta) = \arg
    \min_{(\hat{y}, \hat{x}, \hat{u}) \in \hat{\Omega}_s} \delta^2 \,
    \phi_s(\hat{u}), \\ \hat{\Omega}_s = \{ (\hat{y}, \hat{x},
    \hat{u}) \in \hat{\Omega} : \quad \phi_{y}(\delta \, \hat{y}) \leq
    \beta \, \rho^* \},
  \end{multline*}
  leads to
  \begin{align*}
    \min_{(\hat{y}, \hat{x}, \hat{u}) \in \hat{\Omega}_s} \! \lambda^{-2}
    \delta^2 \, \phi_{s}(\hat{u}) \geq \! \min_{(\hat{y}, \hat{x}, \hat{u})
      \in \hat{\Omega}_s} \! \delta^2 \, \phi_{s}(\hat{u}) = \delta^2 \, \phi_{s}(\hat{u}_\delta)
  \end{align*}
  for all $0 < \lambda \leq 1$ and, since $(\hat{y}_\lambda,
  \hat{x}_\lambda, \hat{u}_\lambda) \in \hat{\Omega}_s$, 
  \begin{align*}
    \min_{(\hat{y}, \hat{x}, \hat{u}) \in \hat{\Omega}_s} \! \lambda^{-2}
    \delta^2 \, \phi_{s}(\hat{u}) \leq \lambda^{-2} \delta^2 \,
    \phi_{s}(\hat{u}_\lambda) = \delta^2 \, \phi_{s}(\hat{u}_\delta),
  \end{align*}
  from which
  \begin{align*}
    \min_{(\hat{y}, \hat{x}, \hat{u}) \in \hat{\Omega}_s} \! \lambda^{-2}
    \delta^2 \, \phi_{s}(\hat{u}) = \delta^2 \, \phi_{s}(\hat{u}_\delta),
  \end{align*}
  for all $0 < \lambda \leq 1$, as in Problem~\eqref{eq:11}.
\end{proof}

\subsection{Problem Summary}
\label{sec:summary}

The optimization problems~\eqref{eq:11} and~\eqref{eq:15} are convex
quadratic programs with linear and convex quadratic constraints. It is
possible to take advantage of the linear nature of the
equations~\eqref{eq:08a}--\eqref{eq:08b} to propagate the state
evolution as a function of the inputs and the initial condition. That
is the entire state $\hat{x}(k)$, $k = 0, \cdots, N$, can be written
as
\begin{align*}
  \begin{pmatrix}
    \hat{x}(0) \\
    \hat{x}(1) \\
    \vdots \\
    \hat{x}(N)
  \end{pmatrix}
  &=
  \mathsf{T} \, \mathsf{x}, &
  \mathsf{x} &=
  \begin{pmatrix}
    \hat{x}_0 \\
    \hat{u}(0) \\
    \hat{u}(1) \\
    \vdots \\
    \hat{u}(N-1)
  \end{pmatrix}
\end{align*}
in which
\begin{align*}
  \mathsf{T} &= 
  \begin{bmatrix}
    I & 0 & 0 & \cdots & 0 \\
    A & B & 0 & \cdots & 0 \\
    \vdots & \vdots & \vdots & \ddots & \vdots \\
    A^{N} & A^{N-1} B & A^{N-2} B & \cdots & B
  \end{bmatrix}.
\end{align*}
Using the above, one can write
\begin{align*}
  \phi_0(\hat{x}(0)) +
  \phi_y(\delta \, \hat{y}) = \| \mathsf{Q}^{1/2} \left ( \mathsf{A} \,
  \mathsf{x} - \mathsf{y} \right ) \|_2^2
\end{align*}
in which $\mathsf{Q} = \operatorname{diag}(q(0), \cdots, q(N-1))$ and
\begin{align*}
  \mathsf{y} &=
  \begin{pmatrix}
    \Pi \, \mu_0 \\
    y(0) - \delta \\
    y(1) - \delta \\
    \vdots \\
    y(N-1) - \delta
  \end{pmatrix}, \\
  \mathsf{A} &=
  \begin{bmatrix}
    \Pi & 0 & 0 & \cdots & 0 \\
    \delta \, C & 0 & 0 & \cdots & 0 \\
    \delta \, C A & \delta \, C B & 0 & \cdots & 0 \\
    \vdots & \vdots & \vdots & \ddots & \vdots \\
    \delta \, C A^{N-1} & \delta \, C A^{N-2} B & \delta \, C A^{N-3} B & \cdots & 0
  \end{bmatrix}.
\end{align*}
The cost function of Problem~\eqref{eq:11} is a special case in which
$\Pi = 0$.

Similar manipulations can convert the linear constraints $\Omega_b$
and $\Omega_R$ to the form 
\begin{align*}
  0 \leq \mathsf{T} \, \mathsf{x} &\leq 1, &
  \mathsf{F} \, \mathsf{x} &\leq 0, &
  \gamma \, \underline{R} \, \mathsf{G} \, \mathsf{x} &\leq
  \mathsf{I} \, \mathbf{x} \leq
  \gamma \, \overline{R} \, \mathsf{G} \, \mathsf{x},
\end{align*}
in which $\mathsf{I} = \begin{bmatrix} 0 & I \end{bmatrix}$, 
\begin{align*}
  \mathsf{F} &=
  \begin{bmatrix}
    F & 0 & 0 & \cdots & 0 \\
    F A & F B & 0 & \cdots & 0 \\
    \vdots & \vdots & \vdots & \ddots & \vdots \\
    F A^{N} & F A^{N-1} B & F A^{N-2} B & \cdots & F B
  \end{bmatrix}, \\
  \mathsf{G} &=
  \begin{bmatrix}
    G & 0 & 0 & \cdots & 0 \\
    G A & G B & 0 & \cdots & 0 \\
    \vdots & \vdots & \vdots & \ddots & \vdots \\
    G A^{N-1} & G A^{N-2} B & G A^{N-3} B & \cdots & 0
  \end{bmatrix}.
\end{align*}
The constraints in $\Omega_{\dot{R}}$ can be reformulated as
\begin{align*}
  \gamma \, \underline{\mathsf{H}} \, \mathsf{x} &\leq
  \mathsf{D} \, \mathbf{x} \leq
  \gamma \, \overline{\mathsf{H}} \, \mathsf{x}
\end{align*}
in which
\begin{align*}
  \underline{\mathsf{H}} &=
  \begin{bmatrix}
    \underline{H}(0) & \overline{R} & \cdots & 0 \\
    \underline{H}(1) A & \underline{H}(1) B & \cdots & 0 \\
    \vdots & \vdots & \ddots & \vdots \\
    \underline{H}(N-1) A^{N-1} & \underline{H}(N-1) A^{N-2} B & \cdots & \overline{R}
  \end{bmatrix}, \\
  \overline{\mathsf{H}} &=
  \begin{bmatrix}
    \overline{H}(0) & \underline{R} & \cdots & 0 \\
    \overline{H}(1) A & \overline{H}(1) B & \cdots & 0 \\
    \vdots & \vdots & \ddots & \vdots \\
    \overline{H}(N-1) A^{N-1} & \overline{H}(N-1) A^{N-2} B & \cdots & \underline{R}
  \end{bmatrix}, \\
  \mathsf{D} &=
  \begin{bmatrix}
    0 & -1 & 1 & \cdots & 0 \\
    0 & 0 & -1 & \cdots & 0 \\
    \vdots & \vdots & \vdots & \ddots & \vdots \\
    0 & 0 & 0 & \cdots & 1
  \end{bmatrix}.
\end{align*}
%% \begin{align*}
%%   \underline{\mathsf{H}} &=
%%   \begin{bmatrix}
%%     \underline{H}(0) & \overline{R} & 0 & \cdots & 0 \\
%%     \underline{H}(1) A & \underline{H}(1) B & \overline{R} & \cdots & 0 \\
%%     \vdots & \vdots & \vdots & \ddots & \vdots \\
%%     \underline{H}(N-1) A^{N-1} & \underline{H}(N-1) A^{N-2} B & \underline{H}(N-1) A^{N-3} B & \cdots & \overline{R}
%%   \end{bmatrix}, \\
%%   \overline{\mathsf{H}} &=
%%   \begin{bmatrix}
%%     \overline{H}(0) & \underline{R} & 0 & \cdots & 0 \\
%%     \overline{H}(1) A & \overline{H}(1) B & \underline{R} & \cdots & 0 \\
%%     \vdots & \vdots & \vdots & \ddots & \vdots \\
%%     \overline{H}(N-1) A^{N-1} & \overline{H}(N-1) A^{N-2} B & \overline{H}(N-1) A^{N-3} B & \cdots & \underline{R}
%%   \end{bmatrix}, \\
%%   \mathsf{D} &=
%%   \begin{bmatrix}
%%     0 & -1 & 1 & \cdots & 0 \\
%%     0 & 0 & -1 & \cdots & 0 \\
%%     \vdots & \vdots & \vdots & \ddots & \vdots \\
%%     0 & 0 & 0 & \cdots & 1
%%   \end{bmatrix}.
%% \end{align*}

The above can be put together to reformulate Problem~\eqref{eq:11} as
the following convex quadratic program with linear constraints
\begin{align*}
  \min_{\mathsf{x}} \quad & \| \mathsf{Q}^{1/2} \left (
  \mathsf{A} \, 
  \mathsf{x} - \mathsf{y} \right ) \|_2^2 \\
  \text{s.t.} \quad & 0 \leq \mathsf{T} \, \mathsf{x} \leq 1, \quad
  \mathsf{F} \, \mathsf{x} \leq 0, \\
  & \gamma \, \underline{R} \, \mathsf{G} \, \mathsf{x} \leq
  \mathsf{I} \, \mathbf{x} \leq \gamma \, \overline{R} \, \mathsf{G}
  \, \mathsf{x}, \\
  & \gamma \, \underline{\mathsf{H}} \, \mathsf{x} \leq
  \mathsf{D} \, \mathbf{x} \leq
  \gamma \, \overline{\mathsf{H}} \, \mathsf{x}
\end{align*}

Likewise, the cost $\phi_s$ can be expressed as
\begin{align*}
  \phi_s(\hat{u}) &= \| \mathsf{R}^{1/2} \, \mathsf{B} \, \mathsf{x} \|_2^2
\end{align*}
in which $\mathsf{R} = \operatorname{diag}(r(0), \cdots, r(N-1))$,
\begin{align*}
  \mathsf{B} &=
  \begin{bmatrix}
    0 & 1 & 0 & \cdots & 0 \\
    0 & -1 & 1 & \cdots & 0 \\
    \vdots & \vdots & \vdots & \ddots & \vdots \\
    0 & 0 & 0 & \cdots & 1
  \end{bmatrix}.
\end{align*}
This means that problem~\eqref{eq:15} can be formulated as the convex
quadratic program with quadratic constraints
\begin{align*}
  \min_{\mathsf{x}} \quad & \delta^2 \, \| \mathsf{R}^{1/2} \, \mathsf{B} \,
  \mathsf{x} \|_2^2 \\
  \text{s.t.} \quad & 0 \leq \mathsf{T} \, \mathsf{x} \leq 1, \quad
  \mathsf{F} \, \mathsf{x} \leq 0, \\
  & \gamma \, \underline{R} \, \mathsf{G} \, \mathsf{x} \leq
  \mathsf{I} \, \mathbf{x} \leq \gamma \, \overline{R} \, \mathsf{G}
  \, \mathsf{x}, \\
  & \gamma \, \underline{\mathsf{H}} \, \mathsf{x} \leq
  \mathsf{D} \, \mathbf{x} \leq
  \gamma \, \overline{\mathsf{H}} \, \mathsf{x}, \\
  & \| \mathsf{Q}^{1/2} \left ( \mathsf{A} \, \mathsf{x} - \mathsf{y}
  \right ) \|_2^2 \leq \beta \, \rho^*
\end{align*}
The above problems can be formulated and solved efficiently using
modern convex optimization algorithms in stock desktop computers for
problems with tens of thousands of variables and constraints.

\subsection{Handling Additional Measurements}
\label{sec:extensions}

The proposed constrained optimization approach can be extended to
handle additional measurements. For example, one could leverage
testing data to estimate the current number of infected and resolving
cases, that is to provide additional measurements of the model
variables $x_2$ and $x_3$. By grouping these measurements into a
vector $y(k) \in \mathbb{R}^3$ in which the first entry is the
fraction of deaths, the second entry is the fraction of infected
individuals, and the third entry is the fraction of resolving
individuals, one can calculate the best constrained estimate by using
the exact same estimator dynamic model
as~\eqref{eq:08a}--\eqref{eq:08b} with the extended measurement model
\begin{align*}
  \hat{y}(k) &= g + H \hat{x}(k), &
  \!\!H &=
  \begin{bmatrix}
    0 & 0 & -\delta \\
    -1 & 1 & 0 \\
    0 & -1 & 1
  \end{bmatrix}\!, &
  \!\!g &=
  \begin{pmatrix}
    \delta \\ 0 \\ 0
  \end{pmatrix}\!,
\end{align*}
resulting into the optimization problem
\begin{equation}
  \label{eq:20}
  \begin{aligned}
    \min_{\hat{y}, \hat{x}, \hat{u}} \quad & \phi_y(\hat{y}) \\
    \text{s.t.} \quad
    & \hat{x}(k+1) = A \hat{x}(k) + B \hat{u}(k) \\
    & \hat{y}(k) = g + H \hat{x}(k) \\
    & \hat{u}(k) \geq 0, \quad k = 0, \cdots, N-1 \\
    & (\hat{x}, \hat{u})  \in \Omega    
  \end{aligned}
\end{equation}
Note how the fatality rate has been incorporated into the matrix $H$
and vector $g$. It should not be expected that the independence
property of Proposition~\ref{prop:2} holds in the presence of
the additional measurements since scaling the infected and resolving
population to match a given fatality rate, as done in the proof of
Proposition~\ref{prop:2}, will no longer preserve optimality. In fact,
one might use the additional data to jointly estimate the parameter
$\delta$.

Another possible extension that might be especially useful in the
presence of additional measurement is the relaxation of the dynamic
equality constraints in Problem~\eqref{eq:20} as a penalty function,
as usually done in smoothing
problems~\cite{anderson:OFI:1979}. Additional nonlinear model features
could also be added at the expense of loosing convexity of the overall
optimization problem.

\section{Conclusions and Discussion}
\label{sec:discussion}

\begin{figure}
  \includegraphics[width=\columnwidth]{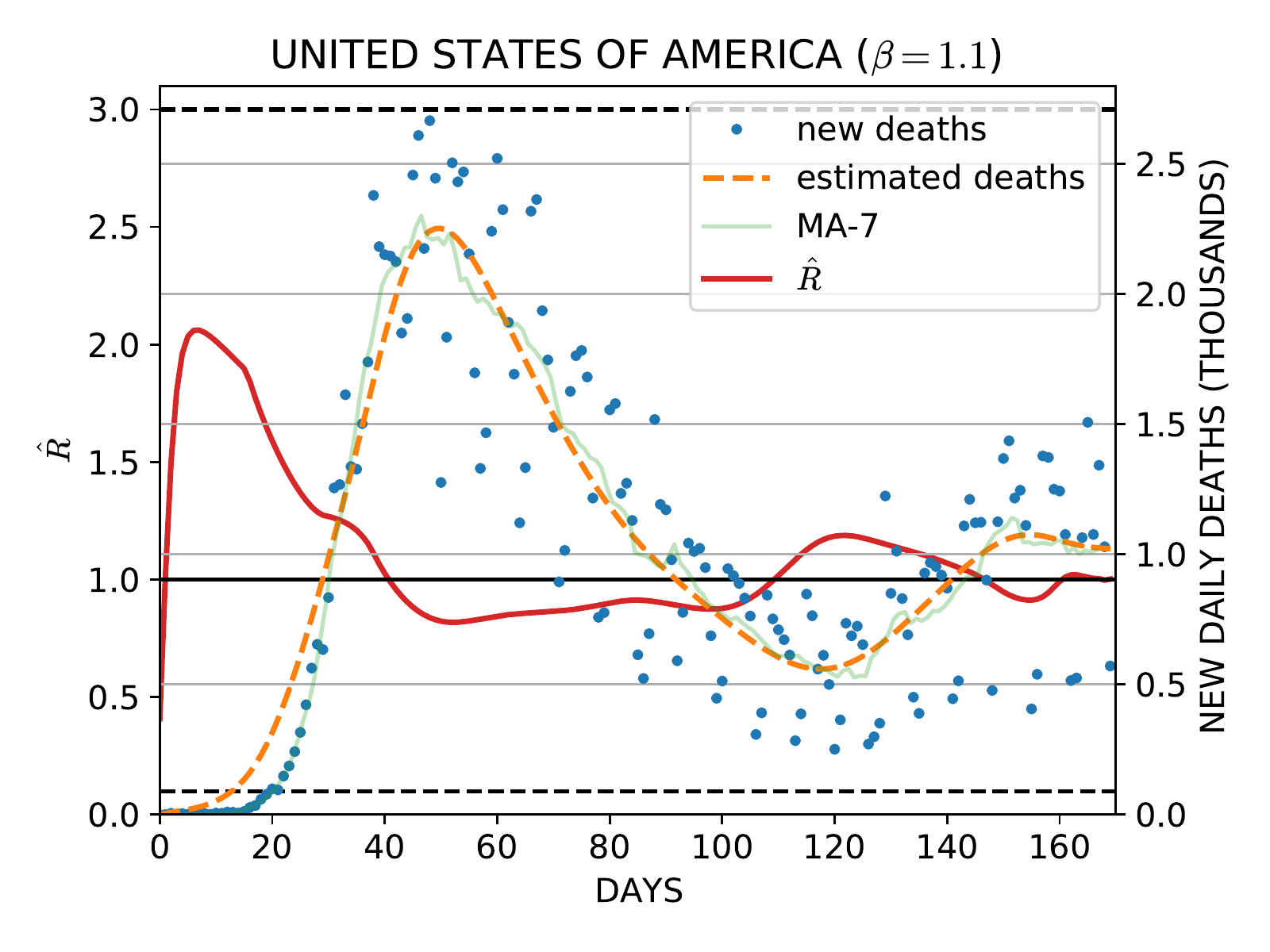}
  \includegraphics[width=\columnwidth]{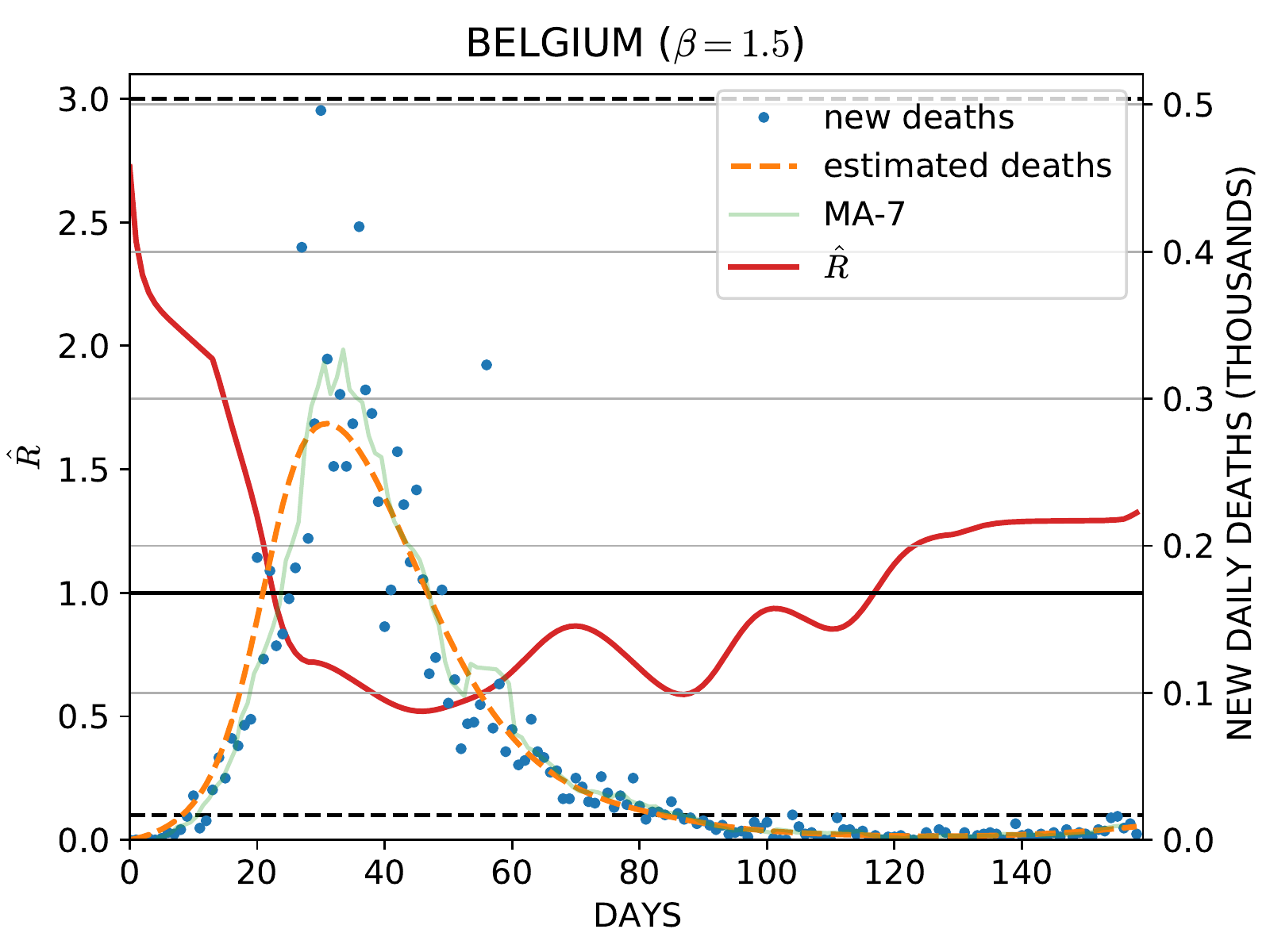}

  \caption{\label{fig:10a}New daily deaths attributed to COVID--19 in
    the \textbf{United States} and \textbf{Belgium}
    from 01/22/2020 through
    08/16/2020~\cite{Dong2020} along with estimates for new
    deaths and $R$ produced by solving Problem~\eqref{eq:15}. The
    value of $\beta$ used is shown in the legends. Also show for
    comparison is a $7$-day moving average of new deaths.
  }
\end{figure}

\begin{figure}
  \includegraphics[width=\columnwidth]{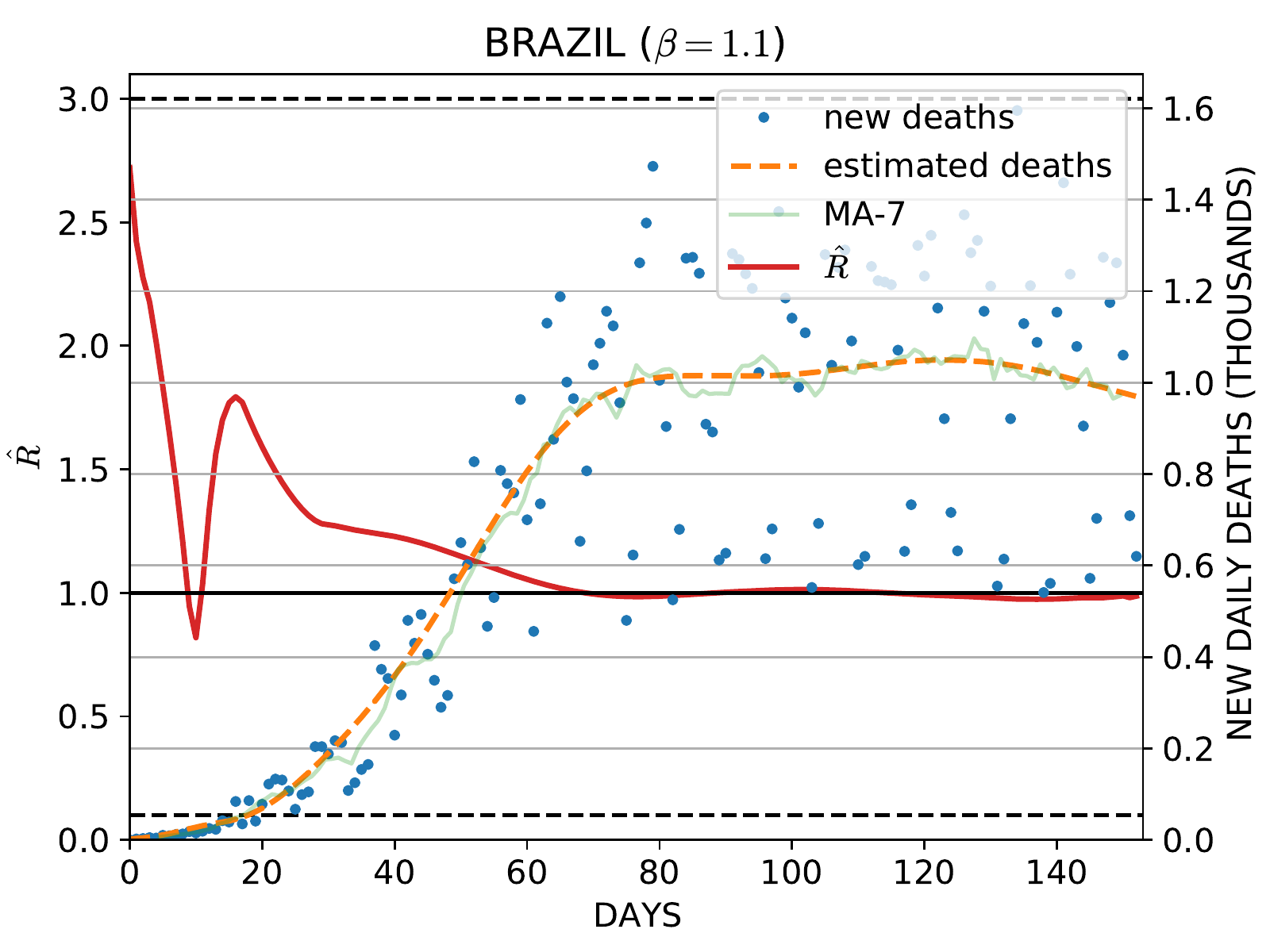}
  \includegraphics[width=\columnwidth]{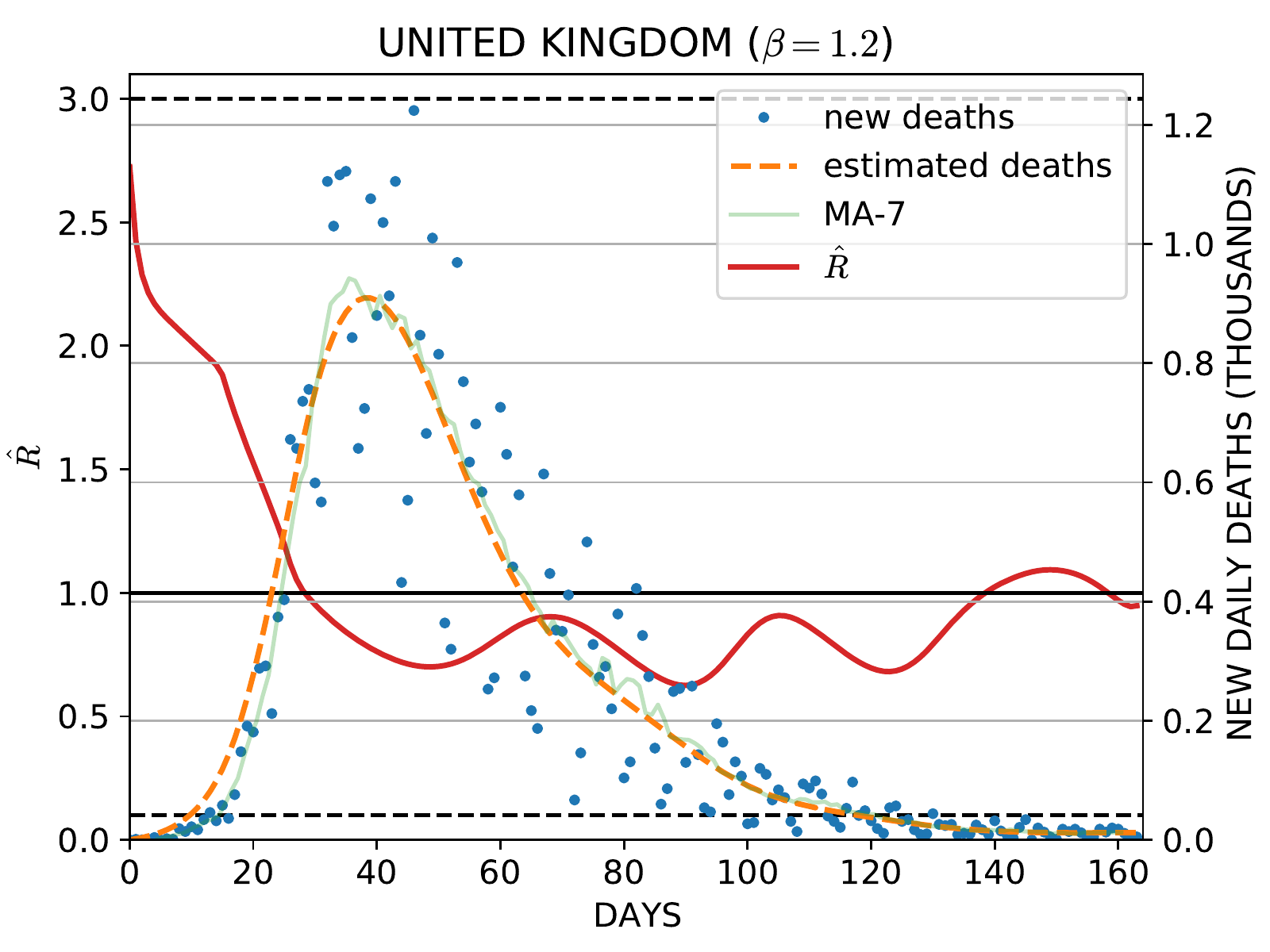}

  \caption{\label{fig:10b}New daily deaths attributed to COVID--19 in
    the \textbf{Brazil} and the \textbf{United Kingdom}
    from 01/22/2020 through
    08/16/2020~\cite{Dong2020} along with estimates for new
    deaths and $R$ produced by solving Problem~\eqref{eq:15}. The
    value of $\beta$ used is shown in the legends. Also show for
    comparison is a $7$-day moving average of new deaths.
  }
\end{figure}

\begin{figure}
  \includegraphics[width=\columnwidth]{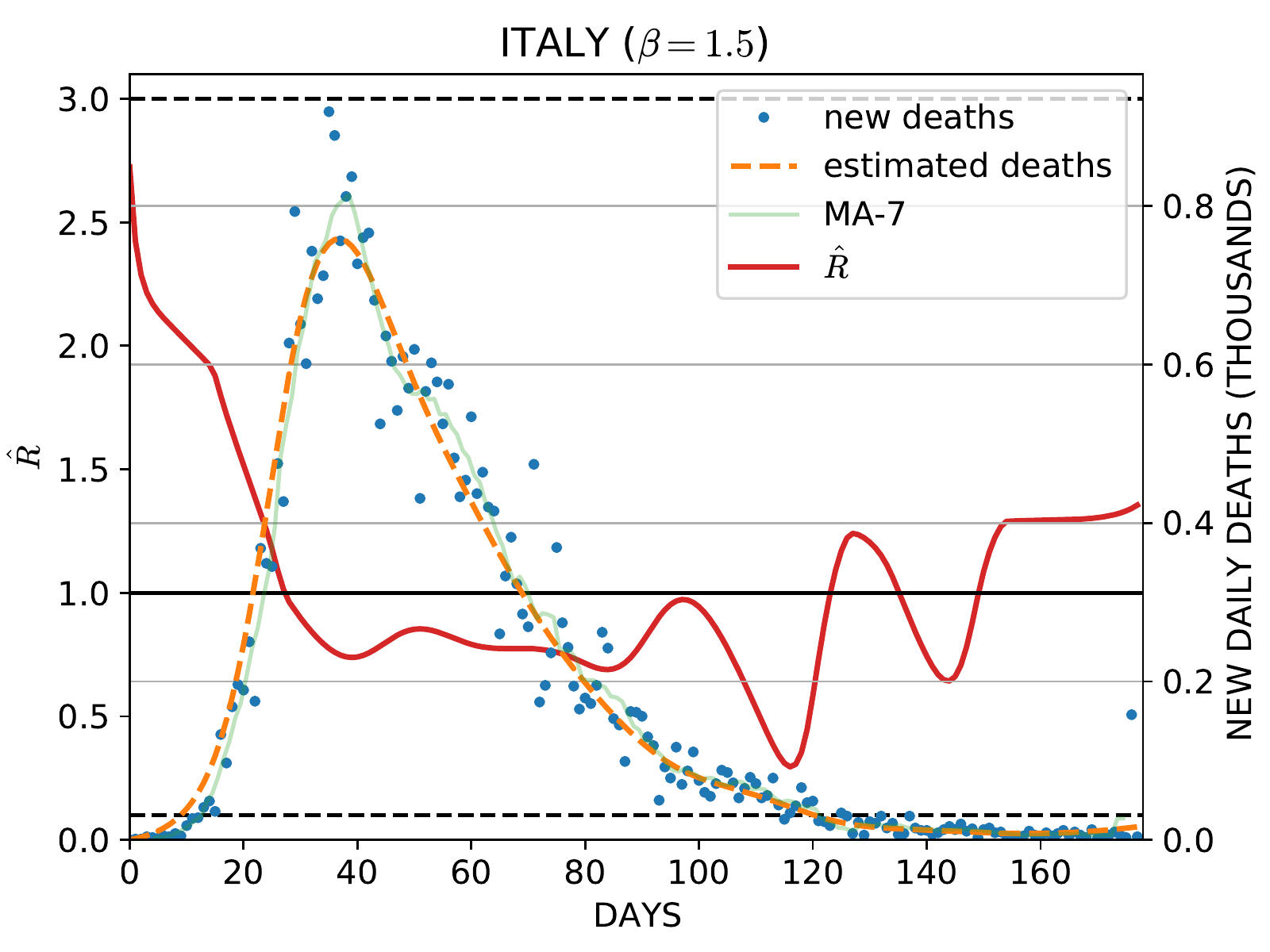}
  \includegraphics[width=\columnwidth]{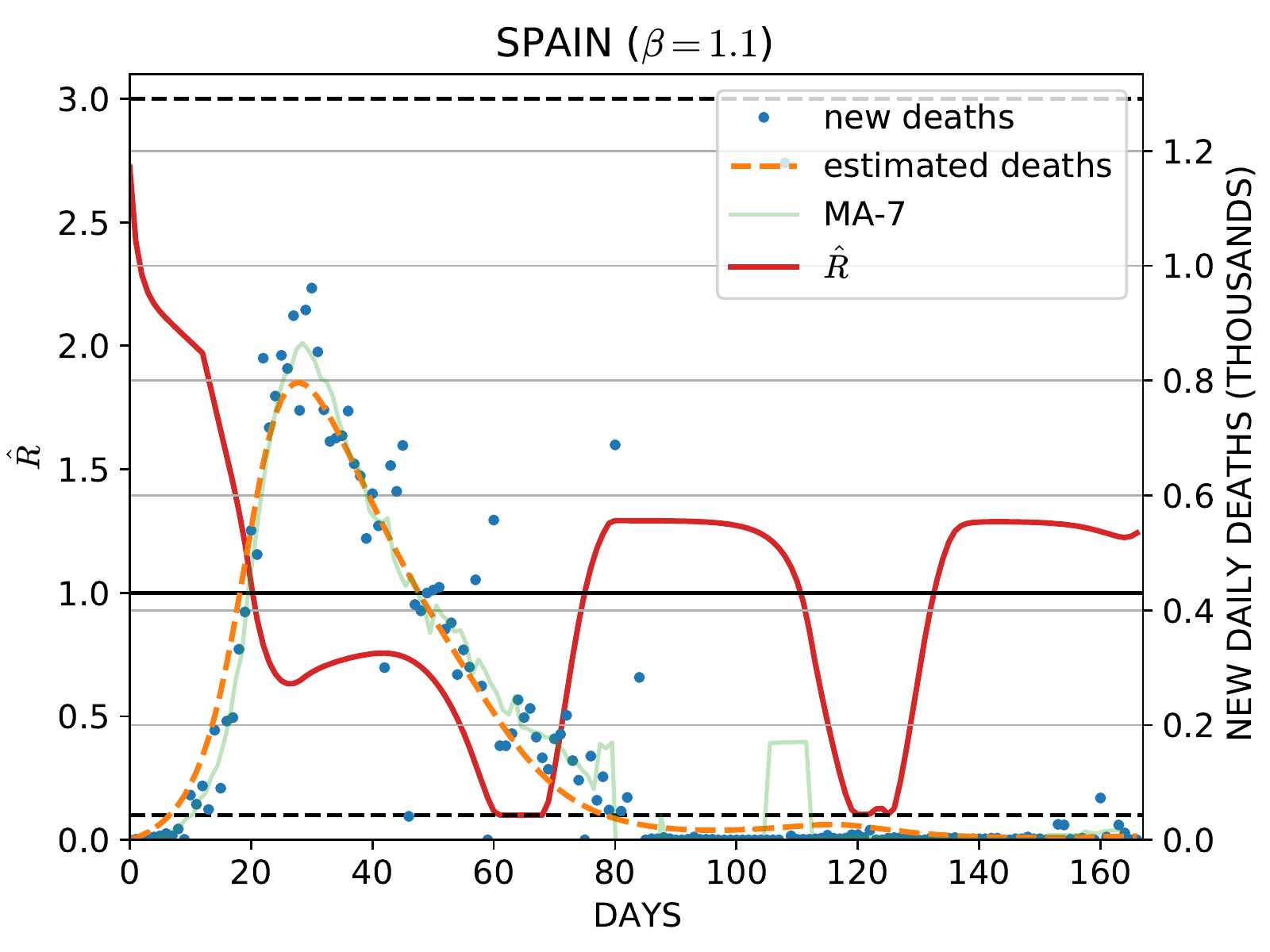}

  \caption{\label{fig:10c}New daily deaths and $7$-day moving average
    attributed to COVID--19 in \textbf{Italy} and \textbf{Spain}
    from 01/22/2020 through 08/16/2020~\cite{Dong2020} along with
    estimates for new deaths and $R$ produced by solving
    Problem~\eqref{eq:15}. The value of $\beta$ used is shown in the
    legends. Also show for comparison is a $7$-day moving average of
    new deaths.}
\end{figure}

\begin{figure}
  \includegraphics[width=\columnwidth]{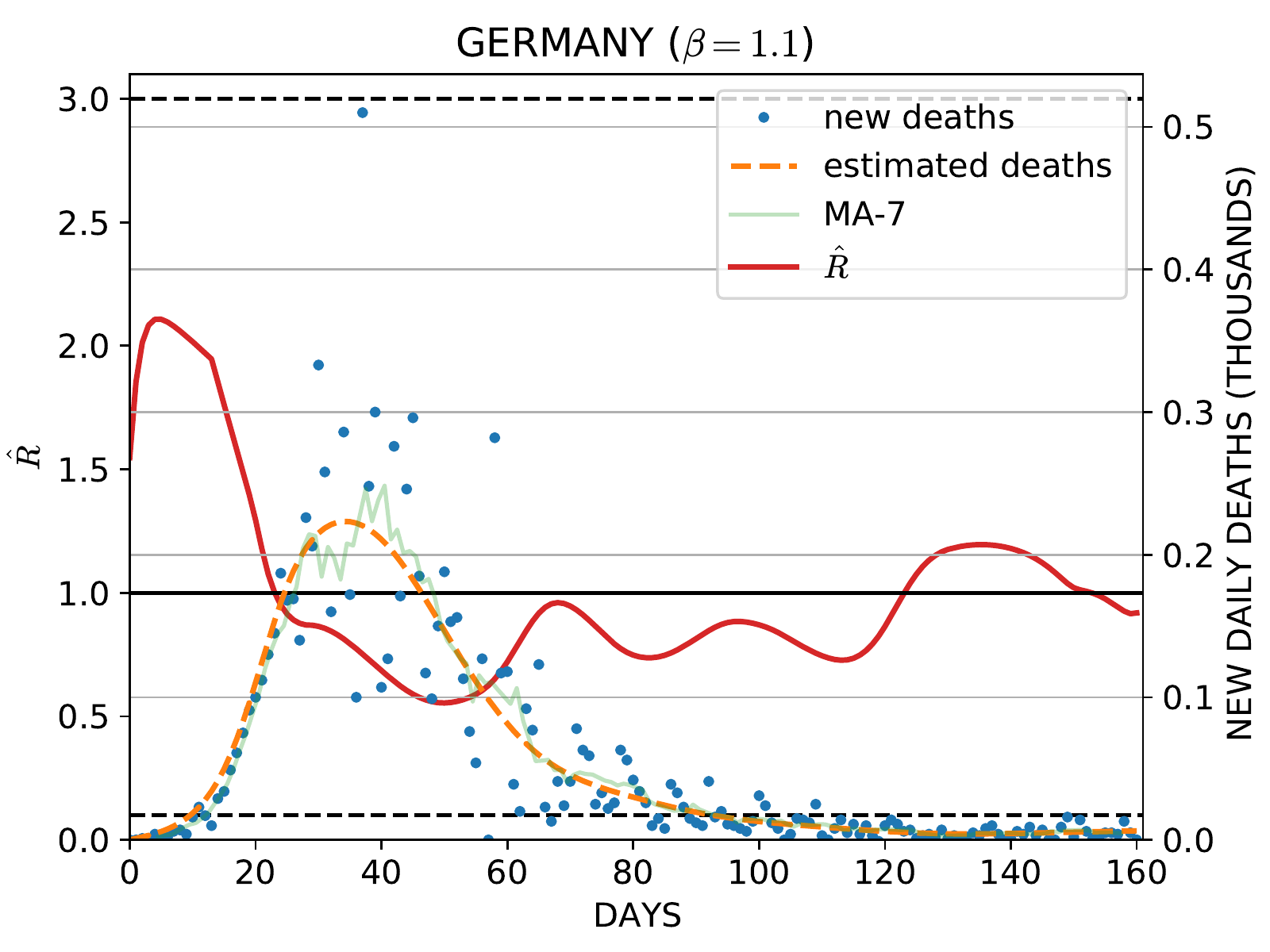}
  \includegraphics[width=\columnwidth]{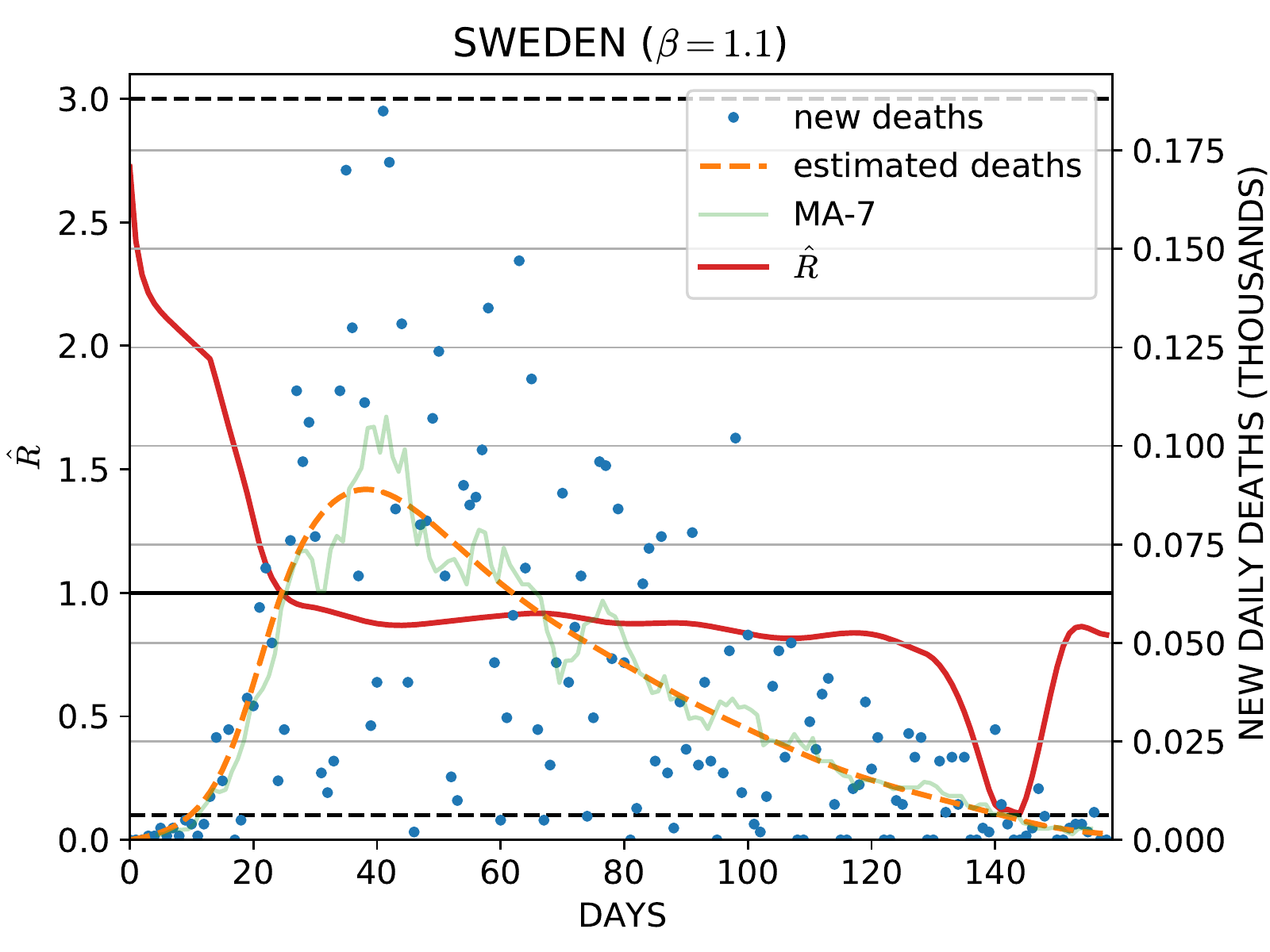}

  \caption{\label{fig:10f}New daily deaths and $7$-day moving average
    attributed to COVID--19 in \textbf{Germany} and \textbf{Sweden}
    from 01/22/2020 through 08/16/2020~\cite{Dong2020} along with
    estimates for new deaths and $R$ produced by solving
    Problem~\eqref{eq:15}. The value of $\beta$ used is shown in the
    legends. Also show for comparison is a $7$-day moving average of
    new deaths.}
\end{figure}

The present paper has revisited unconstrained and proposed new
constrained methods for estimation of the variables and time-varying
parameters of compartmental models with application to the present
COVID-19 epidemic. Even though the underlying model is nonlinear, a
change of coordinates enables the estimation to be done using an
auxiliary linear model that results in convex quadratic optimization
problems which can be solved globally and very efficiently, as well as
be applied to large data sets. The constrained method has been shown
to preserve the physical properties of the model variables throughout
the estimation. Through an additional penalty on the total variation
of the estimates one can trade-off accuracy versus smoothness of the
obtained estimates.

The paper is concluded by showing the results obtained by solving the
constrained estimation Problem~\eqref{eq:15} for COVID-19 death
records for select countries obtained from the COVID-19 Data
Repository by the Center for Systems Science and Engineering (CSSE) at
Johns Hopkins University~\cite{Dong2020}. The selected countries were:
United States of America (already considered earlier), Belgium,
Brazil, United Kingdom, Italy, Spain, Germany, Sweden, all with the
same settings used before in Section~\ref{sec:tradeoff} except for the
parameter $\beta$, which was selected differently for each country
depending on the noise levels of the data. The results and the
corresponding value of $\beta$ is shown for each country in
Figs.~\ref{fig:10a}--\ref{fig:10f}.

\bibliographystyle{ieeetr}
\bibliography{references,books}

\appendix

%\section{Derivation of the Unconstrained Estimator Filter}
\label{appendix:direct}

One can rearrange the last two equations
from~\eqref{eq:04a}--\eqref{eq:04c} as in
\begin{align*}
  z_1(k) &= \gamma^{-1} z_2(k+1) + (1 - \gamma^{-1}) z_2(k) \\
  z_2(k) &= \theta^{-1} z_3(k+1) + ( 1 - \theta^{-1}) z_3(k) 
\end{align*}
where $z_1$ and $z_2$ are the system's state and
\begin{align*}
  z_3(k) &= 1 - \delta^{-1} \, y(k)
\end{align*}
can be thought of as an input. The above dynamic system is clearly
non-causal as the present values of the state depend on future values
of the inputs. Making use of the $z$-transform
operator~\cite{Kwakernaak1972} one can relate the transform of the
output $z_1$, $z_2$ with the transform of the input $z_3$ as the
following inproper transfer-functions
\begin{align*}
  Z_1(z) &= \left ( \gamma^{-1} z + (1 - \gamma^{-1}) \right ) \left (
  \theta^{-1} z + ( 1 - \theta^{-1}) \right ) Z_3(z) \\
  Z_2(z) &= \left ( \theta^{-1} z + ( 1 - \theta^{-1}) \right ) Z_3(k) 
\end{align*}
A causal filter can be constructed by delaying the output by two
samples, that is the filter
%% \begin{align*}
%%   \hat{Z}_1(z) &= z^{-2} \left ( \gamma^{-1} z + (1 - \gamma^{-1}) \right ) \left (
%%   \theta^{-1} z + ( 1 - \theta^{-1}) \right ) Z_3(z) \\
%%   \hat{Z}_2(z) &= z^{-2} \left ( \theta^{-1} z + ( 1 - \theta^{-1}) \right ) Z_3(k) 
%% \end{align*}
\begin{align*}
  \hat{Z}_1(z) &= z^{-2} Z_1(z), &
  \hat{Z}_2(z) &= z^{-2} Z_2(k) 
\end{align*}
which corresponds to the state-space
realization~\eqref{eq:008a}--\eqref{eq:008d}. Because matrix $C$ is
invertible, the initial conditions must satisfy
\begin{align*}
  \begin{pmatrix}
    \hat{z}_1(0) \\
    \hat{z}_2(0)
  \end{pmatrix}
  &=
  C
  \begin{pmatrix}
    x_1(0) \\
    x_2(0)
  \end{pmatrix}
  +
  D \, u(0)
\end{align*}
which can be inverted to produce~\eqref{eq:008f}.

\end{document}